\newtheorem{theoremcounter}{Theorem Counter}[section]
\theoremstyle{definition}
\newtheorem{dfn}[theoremcounter]{Definition}
\newtheorem{remark}[theoremcounter]{Remark}
\theoremstyle{plain}
\newtheorem{lem}[theoremcounter]{Lemma}
\newtheorem{thm}[theoremcounter]{Theorem}
\numberwithin{equation}{section}
\newcommand{\smat}[1]{\bigl(\begin{smallmatrix}#1\end{smallmatrix}\bigr)}
\begin{document}

\title[]{The $p$-adic constant for mock modular forms associated to CM forms. } 

\author{Ryota Tajima} 
\address{Kyushu university}
\email{ryota.tajima.123@gmail.com}

\thanks{The author was supported by JSPS KAKENHI Grant Number JP23KJ1720 and WISE program (MEXT) at Kyushu University. 
}



\maketitle

\begin{abstract}
Let $g \in S_{k}(\Gamma_{0}(N))$ be a normalized newform and $f$ be a harmonic Maass form that is good for $g$. The holomorphic part of $f$ is called a mock modular form and denoted by $f^{+}$.
For odd prime $p$, K. Bringmann, P. Guerzhoy, and B. Kane obtained a $p$-adic modular form of level $pN$ from $f^{+}$ and a certain $p$-adic constant $\alpha_{g}(f)$ in \cite{bringmann2012mock}. When $g$ has complex multiplication by an imaginary quadratic field $K$ and $p$ is split in $\mathcal{O}_{K}$, it is known that $\alpha_{g}(f)$ is zero. On the other hand, we do not know much about $\alpha_{g}(f)$ for an inert prime $p$. In this paper, we prove that $\alpha_{g}(f)$ is a $p$-adic unit when $p$ is inert in $\mathcal{O}_{K}$ and $\dim_{\mathbb{C}}S_{k}(\Gamma_{0}(N))=1$. \end{abstract}

\section{Introduction}
A mock modular form is the holomorphic part $f^{+}(z)$ of a harmonic Maass form $f(z)$. The non-holomorphic part of $f$ is connected to a cusp form by the differential operator $\xi_{2-k}$ that maps from harmonic Maass forms to cusp forms
\begin{align*}
\xi_{2-k}:=2iy^{2-k}\overline{( \dfrac{\partial }{\partial \overline{z}}) } :H_{2-k} ( \Gamma _{0}( N) ) \rightarrow S_{k}( \Gamma _{0}( N) ).
\end{align*}
The image of $f(z)$ by $\xi_{2-k}$ is called the shadow of $f^{+}$. For a cusp form $g$, we consider lifts of $g$ by $\xi_{2-k}$ since $\xi_{2-k}$ is surjective. In \cite{bruinier2008differential}, J. H. Bruinier, K. Ono, and R. C. Rhoades found lifts that satisfy some algebraic properties and they called them good for $g$. (cf. Definition \ref{good}.) For example, if $g$ is a normalized newform with complex multiplication and $f$ is good for $g$, then all coefficients of $f^{+}$ are algebraic.

It is a fundamental problem to find a direct relation between mock modular forms and shadows. In \cite{bringmann2012mock}, K. Bringmann, P. Guerthoy, and B. Kane revealed the $p$-adic relation between a normalized newform and the holomorphic part of their good lifts. We state this result more precisely.

Let $g \in S_{k}(\Gamma_{0}(N))$ be a normalized newform with complex multiplication by an imaginary quadratic field $K$ and $f \in H_{2-k} ( \Gamma _{0}( N) )$ good for $g$. Let $p$ be a prime number such that $p \nmid N$ and inert in $\mathcal{O}_{K}$. We define two operators $U(p)$, $V(p)$, and $D^{k-1}$ acting a formal power series by 
\begin{align*}
& U(p)\left(\sum_{n \in \mathbb{Z}} C(n)q^{n}\right):=\sum_{n \in \mathbb{Z}} C(pn)q^{n}
\\
& V(p)\left(\sum_{n \in \mathbb{Z}} C(n)q^{n}\right):=\sum_{n \in \mathbb{Z}} C(n/p)q^{n}
\\
& D^{k-1}\left(\sum_{n \in \mathbb{Z}} C(n)q^{n}\right):=\sum_{n \in \mathbb{Z}} n^{k-1}C(n)q^{n}.
\end{align*}
For a $p$-adic number $\gamma$, we define the formal power series $\widetilde{\mathcal{F}}_{\gamma} \in \mathbb{C}_{p}[[q]][q^{-1}]$ by
\begin{align*}
\widetilde{\mathcal{F}}_{\gamma}:=f^{+}-\gamma E_{g|V(p)}=\sum _{n\gg -\infty} \left( C_{f}^{+}(n) -n^{1-k}C_{g}(n/p) \right)q^{n}=\sum _{n\gg -\infty} n^{1-k}d_{\gamma}(n)q^{n}.
\end{align*}

\begin{thm}[{\cite[Proposition 1.4]{bringmann2012mock}}]\label{Thm_saisyo}
Keep the notation above. Then for all but exactly one $\gamma \in\mathbb{C}_{p}$, we have the $p$-adic limit
\begin{align*}
\lim _{m\rightarrow \infty }\dfrac{(D^{k-1}\widetilde{\mathcal{F}}_{\gamma}) | U ( p^{2m+1})}{d_{\gamma}( p^{2m+1}) }=g.
\end{align*}
\end{thm}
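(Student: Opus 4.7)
The plan is to push the problem up to weight $k$ by applying the Bol operator $D^{k-1}$, and then exploit the vanishing $C_{g}(p)=0$ (forced by the CM/inert hypothesis on $g$) to make the iteration $U(p^{2m+1})$ act on the relevant pieces in an explicitly controllable way.

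First I would set $\mathcal{F}_{\gamma}:=D^{k-1}\widetilde{\mathcal{F}}_{\gamma}=D^{k-1}f^{+}-\gamma\,(g|V(p))$, whose Fourier expansion is $\sum d_{\gamma}(n)q^{n}$; then $d_{\gamma}(p^{2m+1})$ is simply the $q^{1}$-coefficient of $\mathcal{F}_{\gamma}|U(p^{2m+1})$, so the claimed identity reduces to the coefficient-wise $p$-adic assertion
\begin{align*}
\lim_{m\to\infty}\frac{d_{\gamma}(p^{2m+1}n)}{d_{\gamma}(p^{2m+1})}=C_{g}(n)\quad(n\geq 1).
\end{align*}
Because $f$ is good for $g$, the $C_{f}^{+}(n)$ are algebraic, and classical results (Bruinier--Ono--Rhoades, Guerzhoy) let us interpret $D^{k-1}f^{+}$ as a weakly holomorphic $p$-adic modular form of weight $k$ on $\Gamma_{0}(pN)$; this legitimizes all subsequent $p$-adic manipulations.

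Next I would use the CM/inert hypothesis. Since $C_{g}(p)=0$, the Hecke polynomial at $p$ is $T^{2}+p^{k-1}$, and on the two-dimensional span $\langle g,g|V(p)\rangle$ the operator $U(p)$ has matrix $\smat{0 & 1 \\ -p^{k-1} & 0}$ in the ordered basis $(g,g|V(p))$, whence $U(p)^{2}=-p^{k-1}\cdot\mathrm{id}$ there. By Hecke multiplicativity this yields $C_{g}(p^{2m}n)=(-p^{k-1})^{m}C_{g}(n)$ for $p\nmid n$, and the ratio above equals
\begin{align*}
\frac{p^{(2m+1)(k-1)}n^{k-1}C_{f}^{+}(p^{2m+1}n)-\gamma(-p^{k-1})^{m}C_{g}(n)}{p^{(2m+1)(k-1)}C_{f}^{+}(p^{2m+1})-\gamma(-p^{k-1})^{m}}.
\end{align*}
The heart of the argument is then a $p$-adic asymptotic of the form
\begin{align*}
p^{(2m+1)(k-1)}n^{k-1}C_{f}^{+}(p^{2m+1}n)=\alpha\cdot(-p^{k-1})^{m}C_{g}(n)+(\text{error of strictly larger }p\text{-adic valuation than }m(k-1))
\end{align*}
for a single constant $\alpha=\alpha_{g}(f)\in\mathbb{C}_{p}$ independent of $n$. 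Granting this, the leading factors cancel and the limit becomes $(\alpha-\gamma)C_{g}(n)/(\alpha-\gamma)=C_{g}(n)$, valid precisely when $\gamma\neq\alpha$; this singles out the unique exceptional value $\gamma=\alpha_{g}(f)$.

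The main obstacle is establishing this asymptotic with a sufficiently small error. It rests on an Eichler--Shimura-type three-term recursion satisfied by the Fourier coefficients of $f^{+}$, coming from $f$ being good for $g$ combined with $a_{p}(g)=0$, together with a $p$-adic spectral analysis of $U(p)$ on weakly holomorphic $p$-adic modular forms of weight $k$ on $\Gamma_{0}(pN)$: the Fredholm relation $U(p)^{2}+p^{k-1}=0$ isolates the CM component of $D^{k-1}f^{+}$, and a compactness argument (in the style of Serre, Katz, Hida) damps the complement. Once this is in place, all remaining pieces of the argument reduce to standard facts about CM newforms at inert primes.
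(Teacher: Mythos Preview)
The paper does not prove this theorem: it is quoted from \cite[Proposition~1.4]{bringmann2012mock} as background, with no argument given. There is therefore no in-paper proof to compare your plan against.

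On its own merits, your outline---pass to weight $k$ via $D^{k-1}$, reduce to the coefficient-wise limit $d_{\gamma}(p^{2m+1}n)/d_{\gamma}(p^{2m+1})\to C_{g}(n)$, exploit $a_{p}(g)=0$ so that $U(p)^{2}=-p^{k-1}$ on $\langle g,\,g|V(p)\rangle$, and isolate a unique $\alpha$ controlling the leading $p$-adic behaviour of $p^{(2m+1)(k-1)}n^{k-1}C_{f}^{+}(p^{2m+1}n)$---is indeed the shape of the argument in \cite{bringmann2012mock} and \cite{guerzhoy2010p}. Two points in your sketch would need tightening before it becomes a proof. First, the relation $U(p)^{2}+p^{k-1}=0$ holds only on the two-dimensional oldspace attached to $g$, not on weakly holomorphic or $p$-adic forms of weight $k$ in general, so it cannot be invoked as a ``Fredholm relation'' that damps the complement; what the cited references actually use is the Hecke equivariance of $D^{k-1}f$ modulo $g$ (a consequence of $f$ being good for $g$), which gives a three-term linear recursion for $C_{D^{k-1}f}(p^{m}n)$ with $p$-adically bounded inhomogeneity, and solving that recursion produces both $\alpha$ and the error bound. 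Second, your displayed ratio and the identity $C_{g}(p^{2m}n)=(-p^{k-1})^{m}C_{g}(n)$ assume $p\nmid n$; the case $p\mid n$ has to be handled separately.
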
 
They also showed a theorem similar to Theorem \ref{Thm_saisyo} when $g$ does not have complex multiplication.
We denote the exceptional constant $\gamma$ of Theorem \ref{Thm_saisyo} by $\alpha_{g}(f)$. In \cite{bringmann2012mock}, K. Bringmann, P. Guerzhoy, and B. Kane showed a remarkable result that $\widetilde{\mathcal{F}}_{\alpha_{g}(f)}$ is not only a formal power series but also a $p$-adic modular form. Similarly, if $g$ does not have complex multiplication, they showed that there exists precisely one $\alpha_{g}(f) \in\mathbb{C}_{p}$ such that $\widetilde{\mathcal{F}}_{\alpha_{g}(f)}$ is a $p$-adic modular form. In order to develop the $p$-adic theory of mock modular forms, it is important to investigate the $p$-adic constant $\alpha_{g}(f)$. For example, it is an interesting question whether $\alpha_{g}(f)$ is zero or not. If $\alpha_{g}(f)$ is not zero, we can choose $\gamma=0$ in Theorem \ref{Thm_saisyo}. Therefore we recover the shadow $g$ from only a mock modular form $f^{+}$. 

In this paper, we assume that $g$ has complex multiplication. Then $\alpha_{g}(f)$ is indepedent on the lifts of $g$ and we denote $\alpha_{g}(f)$ by $\alpha_{g}$ from this. If $p$ is split in $\mathcal{O}_{K}$, then $\alpha_{g}=0$ by \cite{bringmann2012mock}. However, it is not known that whether $\alpha_{g}$ is zero or not when $p$ is inert in $\mathcal{O}_{K}$. It is known no example such that $\alpha_{g}=0$ and one example such that $\alpha_{g}\not=0$ when $p$ is inert in $\mathcal{O}_{K}$. 
\begin{thm}[{\cite{guerzhoy2010p}}]\label{example}

Let $g(z) := \eta ( z) ^{8} \in S_{4}( \Gamma _{0}( 9) ) $.
Then $g$ has complex multiplication by $K=\mathbb{Q}(\sqrt{-3})$, and the following statements are hold.

 (1) There exists a good lift $f$ of $g$ such that 
 \begin{align*}
 D^{k-1}(f^{+})=-\eta ( 3z) ^{8} ( \dfrac{\eta ( z) ^{3}}{\eta (9z) ^{3}}+3) ^{2} = \sum C( n) q^{n}
\end{align*}

 (2) Let $p$ be inert in $\mathcal{O}_{K}$.
If $p^{3}\nmid C( p)$, then $\alpha_{g} \not=0$.

\end{thm}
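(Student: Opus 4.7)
For part (1) I would take as a finite-dimensional check: using the explicit product expansion of $-\eta(3z)^8(\eta(z)^3/\eta(9z)^3 + 3)^2$ together with the Bruinier--Ono--Rhoades existence theorem for good lifts, one verifies directly that some good lift $f$ of $g=\eta(z)^8$ realizes this series as $D^{k-1}f^+$. The substance of the theorem is in part (2), which I would prove by contrapositive: assume $\alpha_g = 0$ and deduce $p^3 \mid C(p)$.

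Under the assumption $\alpha_g = 0$, the Bringmann--Guerzhoy--Kane result recalled in the introduction tells us that $\widetilde{\mathcal{F}}_0 = f^+$ is itself a $p$-adic modular form of weight $2-k = -2$ on $\Gamma_0(9p)$. I would then apply the operator $U(p)$, which preserves $p$-adic modularity. A direct inspection of the product formula (using $\eta(z)^3/\eta(9z)^3 = q^{-1}+O(1)$ and $\eta(3z)^8 = q+O(q^2)$) shows that $D^{k-1}f^+$, and hence $f^+$ itself, has principal part only at $q^{-1}$. Therefore $f^+|U(p) = \sum_{n\ge 0}C_f^+(pn)q^n$ is holomorphic at the cusp $\infty$ for every prime $p \geq 2$, so it lies in Serre's classical space of holomorphic $p$-adic modular forms.

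In Serre's space, with the standard normalization inherited from $g \in \mathbb{Z}[[q]]$, the Fourier coefficients are $p$-integral; in particular $C_f^+(p) = [q^1](f^+|U(p)) \in \mathbb{Z}_p$. The definition of $D^{k-1}$ gives $C(p) = p^{k-1}C_f^+(p) = p^3 C_f^+(p)$, so $v_p(C(p)) \geq 3$, i.e.\ $p^3 \mid C(p)$, contradicting the hypothesis. The main obstacle I anticipate is this integrality step: Bringmann--Guerzhoy--Kane define ``$p$-adic modular form'' so as to accommodate the non-trivial principal part of a mock modular form, and one must carefully verify that after the $U(p)$-truncation of that principal part we genuinely land inside Serre's integral space, rather than in the larger (possibly unbounded) space of $p$-adic limits of weakly holomorphic forms. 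Once the normalization is pinned down using the algebraicity of the Fourier coefficients of the good lift of a CM form, the remainder of the argument is just bookkeeping with the factor $p^{k-1}$.
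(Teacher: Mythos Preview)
This theorem is not proved in the present paper; it is quoted from Guerzhoy and serves as motivation for Theorem~\ref{main}, which removes the hypothesis $p^{3}\nmid C(p)$ altogether. So there is no ``paper's own proof'' to line your attempt up against. That said, both Guerzhoy's original argument and the paper's proof of the more general Theorem~\ref{main not 0} proceed \emph{directly} from the limit formula
\[
\alpha_{g}\;=\;\lim_{m\to\infty}\frac{C(p^{2m+1})}{\beta^{2m}},\qquad \beta^{2}=-p^{\,k-1},
\]
together with Hecke--type recursions on the weakly holomorphic form $D^{k-1}(f^{+})$ that control $v_{p}\bigl(C(p^{2m+1})\bigr)$ in terms of $v_{p}\bigl(C(p)\bigr)$ (this is exactly what underlies Lemma~\ref{p-adic valuation} and Lemma~\ref{hanson}). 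Under $p^{3}\nmid C(p)$ one obtains $v_{p}\bigl(C(p^{2m+1})\bigr)=v_{p}\bigl(C(p)\bigr)+(k-1)m$, so every term of the sequence---hence the limit $\alpha_{g}$---has finite $p$-adic valuation.

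Your route is genuinely different: a contrapositive via the BGK characterization of $\alpha_{g}$. It is attractive because it sidesteps any recursion, but the obstacle you flag is a real gap, not merely a normalization to be ``pinned down''. In the Bringmann--Guerzhoy--Kane framework, ``$p$-adic modular form'' means a $p$-adic limit of \emph{weakly holomorphic} forms (this is how the pole of $f^{+}$ is accommodated), with no a~priori bound on the pole orders of the approximating sequence; applying $U(p)$ therefore need not yield a limit of holomorphic forms, and even if it did, being such a limit carries no automatic $p$-integrality---Serre's space with that property is formulated for nonnegative weight, not weight $-2$. Your proposed fix, invoking the algebraicity of the coefficients of $f^{+}$ from Theorem~\ref{cof of mock}, does not close the gap either: membership in the number field $F_{g}(\zeta_{M})$ gives no lower bound whatsoever on $v_{p}\bigl(C_{f}^{+}(p)\bigr)$, since algebraic numbers may have arbitrarily negative $p$-adic valuation. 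What you would actually need is $v_{p}\bigl(C_{f}^{+}(n)\bigr)\geq 0$ for $n\geq 1$, and establishing that is essentially the same work as the direct valuation estimate Guerzhoy (and this paper, through Lemma~\ref{hanson}) carry out.
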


\begin{remark}
It was shown that $p^{3}\nmid C( p)$ for all inert primes $p<32500$ by \cite{guerzhoy2010p}.
In 2022, Hanson and Jameson show that $p^{3}\nmid C(p)$ for all inert primes in \cite{hanson2022cusp}.
\end{remark}

\begin{remark}
The operator $D^{k-1}$ defines the map from harmonic Maass forms to weakly holomorphic modular forms
 \begin{align*}
D^{k-1}:H_{2-k} ( \Gamma _{0}( N) ) \rightarrow M_{k}^{!}( \Gamma _{0}( N) )
\end{align*}
and kills the non-holomorphic part of $f$.
\end{remark}

 In this paper, we show that if $\dim_{\mathbb{C}}S_{k}(\Gamma_{0}(N))=1$ and $p$ is odd prime and inert in $\mathcal{O}_{K}$, then $\alpha_{g} \not= 0$. Furthermore, we also determine the $p$-adic valuation of $\alpha_{g}$. We state our main theorem.

\begin{thm}\label{main}

Suppose that $k$ is an even integer and $N$ is a natural number. Let $g \in S_{k}(\Gamma_{0}(N))$ be a normalized newform with complex multiplication by an imaginary quadratic field $K$ and $p$ an odd prime number.
Assume that $p$ is inert in $\mathcal{O}_{K}$ and $p \nmid N$.
If $\dim_{\mathbb{C}}$ $S_{k}(\Gamma_{0}(N))=1$, then $\alpha_{g}$ is a $p$-adic unit.

\end{thm}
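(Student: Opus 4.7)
The plan is to extract an explicit $p$-adic limit formula for $\alpha_g$ from the CM vanishing of Fourier coefficients, and then use the one-dimensionality of $S_k(\Gamma_0(N))$ to pin down the $p$-adic valuation of that limit.

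As a first step, I would compute the Fourier coefficients $C_g(p^j)$. Since $g$ has CM by $K$ and $p$ is inert in $\mathcal{O}_K$, one has $a_p(g)=0$, and the Hecke recursion at $p$ gives
\begin{align*}
C_g(p^{2m+1})=0, \qquad C_g(p^{2m})=(-p^{k-1})^m \qquad (m\geq 0).
\end{align*}
Substituting $n=p^{2m+1}$ into the definition of $\widetilde{\mathcal{F}}_{\alpha_g}$ shows that its coefficient at $q^{p^{2m+1}}$ equals
\begin{align*}
C_f^+(p^{2m+1})-(-1)^m\alpha_g\, p^{-(m+1)(k-1)}.
\end{align*}
Because $\widetilde{\mathcal{F}}_{\alpha_g}$ is a $p$-adic modular form (this is the main point of \cite{bringmann2012mock}) its Fourier coefficients are $p$-adically bounded, while $p^{-(m+1)(k-1)}\to\infty$ in $|\cdot|_p$. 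Hence the two summands above must cancel to leading order, yielding
\begin{align*}
\alpha_g=\lim_{m\to\infty}(-1)^m p^{(m+1)(k-1)}\,C_f^+(p^{2m+1}).
\end{align*}

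From this formula, $v_p(\alpha_g)=0$ is equivalent to the sharp equality $v_p\bigl(C_f^+(p^{2m+1})\bigr)=-(m+1)(k-1)$ for $m$ sufficiently large. To establish this I would examine $D^{k-1}\widetilde{\mathcal{F}}_{\alpha_g}$, a $p$-adic modular form of weight $k$ on $\Gamma_0(pN)$ whose Hecke eigenvalues at primes $\ell\nmid Np$ match those of $g$. Under the hypothesis $\dim_{\mathbb{C}}S_k(\Gamma_0(N))=1$, the space of weight-$k$ $p$-adic modular forms sharing these eigenvalues is controlled by $g$ and its two $U(p)$-stabilizations. Since $a_p(g)=0$, the Hecke polynomial at $p$ is $X^2+p^{k-1}$, whose roots have $p$-adic slope exactly $(k-1)/2$; propagating this slope information through the $U(p^{2m+1})$-extractions, combined with the identity
\begin{align*}
d_{\alpha_g}(p^{2m+1}) = p^{m(k-1)}\bigl[p^{(m+1)(k-1)}C_f^+(p^{2m+1})-(-1)^m\alpha_g\bigr],
\end{align*}
forces the sharp valuation.

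The main obstacle is precisely this last step. \Cref{example} (together with \cite{hanson2022cusp}) yields only $\alpha_g\neq 0$ under a mild divisibility assumption on a single Fourier coefficient, whereas here one needs the sharper conclusion $v_p(\alpha_g)=0$. The one-dimensionality of $S_k(\Gamma_0(N))$ is exactly what makes the sharp analysis feasible: it eliminates contributions from other newforms that could introduce cancellations in $C_f^+(p^{2m+1})$, so the CM structure of $g$ alone dictates the $p$-adic size of those coefficients. Extending the theorem to higher-dimensional spaces would require new ideas to control such cancellation.
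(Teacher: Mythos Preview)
Your derivation of the limit formula
\[
\alpha_g=\lim_{m\to\infty}(-1)^m p^{(m+1)(k-1)}\,C_f^+(p^{2m+1})
\]
from the $p$-adic boundedness of $\widetilde{\mathcal{F}}_{\alpha_g}$ is correct and agrees with the paper's formula $\alpha_g=\lim_m C_{D^{k-1}(f)}(p^{2m+1})/\beta^{2m}$ once one notes $\beta^{2m}=(-p^{k-1})^m$. The gap is in the second half of your argument, where you claim that a slope analysis of $D^{k-1}\widetilde{\mathcal{F}}_{\alpha_g}$ ``forces the sharp valuation.'' This is not a proof: knowing that both roots of $X^2+p^{k-1}$ have slope $(k-1)/2$ gives you at best inequalities on the growth of $U(p)$-iterates, not the exact equality $v_p\bigl(C_f^+(p^{2m+1})\bigr)=-(m+1)(k-1)$ for large $m$. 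The $p$-adic eigenspace you describe is not canonically one-dimensional, and even when it is controlled by $g$ and its $p$-stabilizations, nothing in your outline rules out $\alpha_g=0$ or $v_p(\alpha_g)>0$; indeed, your own last paragraph concedes that the sharper conclusion requires something beyond the divisibility input of \cite{hanson2022cusp}.

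The paper's argument supplies exactly the missing mechanism. It does not work directly with $f^+$ but instead produces an explicit auxiliary form $F=-q^{-1}+\sum_{n\geq 2}C_F(n)q^n\in S_k^{\#,0}(\Gamma_0(N))$ (from \cite{hanson2022cusp}) for which the \emph{exact} valuations $v_p(C_F(p^{2m+1}))=(k-1)m$ are known. Using the one-dimensionality of the quotient $\widehat{S_k}^{\#,0}(\Gamma_0(N))$ it writes $D^{k-1}(f)=cF+D^{k-1}(h)+dg$, shows $h$ has algebraic coefficients so that the $D^{k-1}(h)$-term vanishes in the limit, and concludes $\alpha_g=c\cdot\lim_m C_F(p^{2m+1})/\beta^{2m}$, a product of $c$ with a $p$-adic unit. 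Finally a Petersson-type pairing on $\widehat{S_k}^{\#,0}(\Gamma_0(N))$ gives $c=\langle D^{k-1}(f),g\rangle/\langle F,g\rangle=1$. The one-dimensionality is used not to ``eliminate other newforms'' in an eigenspace, but to force $D^{k-1}(f)$ and $F$ to be proportional modulo error terms whose constant of proportionality can be computed. Your outline lacks both the explicit $F$ with controlled valuations and the normalization step; without these, the slope heuristic cannot be completed.
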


I will explain the idea of proof. In the proof of (2) in Theorem \ref{example}, they used the fact that $D^{k-1}(f^{+})$ is equal to a weakly holomorphic modular form  $F$ defined by the Dedekind's eta-function (cf. (1) in Theorem \ref{example}.) 
However, an explicit calculation of the holomorphic part of good lifts is very difficult in general. This difficulty comes from the image space of $D^{k-1}$ has infinite dimension over $\mathbb{C}$. In this paper, we consider the quotient space of $M_{k}^{!}( \Gamma _{0}( N) )$ and we denote this quotient space by $\widehat{S_{k}}^{\#, 0}( \Gamma _{0}( N) )$. (cf. Lemma \ref{isom}.)  We show that $\widehat{S_{k}}^{\#, 0}( \Gamma _{0}( N) )$ has finite dimension over $\mathbb{C}$ and we can regard $D^{k-1}$ as the map to $\widehat{S_{k}}^{\#, 0}( \Gamma _{0}( N) )$
\begin{align*}
D^{k-1}:H_{2-k} ( \Gamma _{0}( N) ) \rightarrow \widehat{S_{k}}^{\#, 0}( \Gamma _{0}( N) ).
\end{align*} 
It is easily shown that 
\begin{align*}
D^{k-1}(f^{+})=cF \text{ in } \widehat{S_{k}}^{\#, 0}( \Gamma _{0}( N) )
\end{align*}
 for some non-zero scalar $c$. Lastly, we evaluate the error term between $D^{k-1}(f)$ and $cF$.  
From the definition of $\widehat{S_{k}}^{\#, 0}( \Gamma _{0}( N) )$, there exists a weakly holomorphic modular form $h$ and a complex number $d$ such that
\begin{align}\label{intoro siki}
D^{k-1}(f^{+})=cF+D^{k-1}(h)+dg \text{ in } M_{k}^{!}( \Gamma _{0}( N) ).
\end{align}
The error term coming from $g$ is zero since $p$ is inert in $\mathcal{O}_{K}$.
Considering the Galois action on \ref{intoro siki}, it is shown that $h$ is defined over some algebraic field. This fact implies that the error term coming  from $D^{k-1}(h)$ is equal to $0$. Thus we conclude that $\alpha_{g}\not=0$.

\section{Acknowledgements}
The author would like to show my greatest appreciation to Professor Shinichi Kobayashi for giving many numerous and extremely helpful comments on earlier versions of this paper. I also thank Professor Toshiki Matsusaka for giving many comments about mock modular forms. I was supported by JSPS KAKENHI Grant Number JP23KJ1720 and WISE program (MEXT) at Kyushu University.

 \section{Harmonic Maass forms and weakly holomorphic modular forms}
 In this section, we introduce facts for harmonic Maass forms and weakly holomorphic modular forms.
 
Throughout, let $\mathbb{H}$ be the upper-half of the complex plane and $z=x+iy \in \mathbb{H}$ with $x, y \in \mathbb{R}$

\begin{dfn}
Let $k \in \mathbb{Z}$ and $N \in \mathbb{N}$.
Then a harmonic Maass form of weight $k$ on $ \Gamma _{0}( N)$ is any smooth function $f$ on $\mathbb{H}$ satisfying:

(1)$f\left(\dfrac{az+b}{cz+d}\right)=(cz+d)^{k}f(z)$ for all $\gamma =  \smat{a & b \\ c & d} \in \Gamma _{0}( N)$.

(2)$\Delta_{k}f=0$, where
 $\Delta_{k}=-y^{2}\left( \dfrac{\partial ^{2}}{\partial x^{2}}+\dfrac{\partial ^{2}}{\partial y^{2}}\right) +iky\left( \dfrac{\partial }{\partial x}+i\dfrac{\partial }{\partial y}\right) $. 
 
(3)There is a polynomial $P_{\infty}(z) \in \mathbb{C}[q^{-1}]$ such that
\begin{align*}
f(z)-P_{\infty}(z)=\text{O} (e^{-\varepsilon y})  \text{ as } y  \rightarrow \infty \text{ for some } \epsilon >0.
\end{align*}
Analogous conditions are required at all cusps.
 \end{dfn}
 
\vspace{0.4cm}
 
We denote the vector space of these harmonic Maass forms by $H_{k}( \Gamma _{0}( N) )$.

Every harmonic Maass form $f(z)$ of weight $2-k$ has a Fourier expansion of the form
\begin{align*}
f=\sum _{n\gg -\infty}C_{f}^{+}( n) q^{n}+\sum _{n <0}C_{f}^{-}( n) \Gamma ( k-1,4\pi  \left| n\right| y) q^{n}.
\end{align*}

Obviously, each $f(z)$ is the sum of two disjoint pieces, the holomorphic part of $f(z)$

\begin{align*}
f^{+}(z):=\sum _{n\gg -\infty}C_{f}^{+}( n) q^{n},
\end{align*}

and the non-holomorphic part of $f(z)$

\begin{align*}
f^{-}(z):=\sum _{n <0}C_{h}^{-}( n) \Gamma ( k-1,4\pi  \left| n\right| y) q^{n}.
\end{align*}

In addition, $\sum _{n\leq 0}C_{f}^{+}( n) q^{n}$ is called the principal part of $f(z)$ at the cusp $\infty$.

\begin{remark}
Every weakly holomorphic modular form $f(z)$ is in $H_{k}( \Gamma _{0}( N) )$ with $f^{-}(z)=0.$
\end{remark}

\begin{dfn}
A mock modular form is the holomorphic part of a harmonic Maass form.
\end{dfn}
 
 \begin{thm}[{\cite{bruinier2008differential}}]\label{xi}
 Suppose that $k$ is an integer greater than or equal to 2.
  We define two operators $D:=\dfrac{1}{2 \pi i}\dfrac{d}{dz}$ and $\xi _{w}:=2iy^{w}\overline{( \dfrac{\partial }{\partial \overline{z}}) }$ where $w \in \mathbb{Z}$.
  \\
Then
\begin{align*}
& D^{k-1}:H_{2-k} ( \Gamma _{0}( N) ) \rightarrow M_{k}^{!}( \Gamma _{0}( N) ), 
\\
 &\xi_{2-k}:H_{2-k} ( \Gamma _{0}( N) ) \rightarrow S_{k}( \Gamma _{0}( N) ) 
 \end{align*}
 and
 \begin{align*}
 D^{k-1}(f^{-})=0, \xi_{2-k}(f^{+})=0.
 \end{align*}

 In particular, $\xi_{2-k}:H_{2-k} ( \Gamma _{0}( N) ) \rightarrow S_{k}( \Gamma _{0}( N) ) $ is surjective.
 \end{thm}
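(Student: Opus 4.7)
The plan is to prove the four assertions of the theorem in sequence: (i) $\xi_{2-k}$ maps $H_{2-k}(\Gamma_{0}(N))$ into $S_{k}(\Gamma_{0}(N))$; (ii) $D^{k-1}$ maps $H_{2-k}(\Gamma_{0}(N))$ into $M_{k}^{!}(\Gamma_{0}(N))$; (iii) the two vanishing identities $\xi_{2-k}(f^{+}) = 0$ and $D^{k-1}(f^{-}) = 0$; and (iv) surjectivity of $\xi_{2-k}$. Parts (i)--(iii) are structural calculations on the Fourier expansion and standard commutator identities; the essential obstacle sits in part (iv).

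For (i), a standard chain-rule computation shows that $\xi_{2-k}$ intertwines the weight-$(2-k)$ and weight-$k$ slash actions: the complex-conjugate derivative contributes $(c\bar{z}+d)^{-2}$, the outer conjugation contributes $(cz+d)^{-2}$, and the factor $y^{2-k}$ absorbs $|cz+d|^{2(k-2)}$, so that the total automorphy factor is exactly $(cz+d)^{k}$. Holomorphy of $\xi_{2-k}(f)$ then follows from harmonicity of $f$ via the identity
\[
\Delta_{2-k} = -4y^{2}\,\partial_{z}\partial_{\bar{z}} + 2i(2-k)\,y\,\partial_{\bar{z}},
\]
which, applied to $\partial_{\bar{z}}(\xi_{2-k}f) = \partial_{\bar{z}}(2iy^{2-k}\overline{\partial_{\bar{z}}f})$, collapses after one line of algebra to a scalar multiple of $\overline{\Delta_{2-k}f} = 0$. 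The Fourier expansion of $f^{-}$ then maps to a $q$-series supported on positive indices, and analogous analysis at every cusp yields cuspidal decay, so the image lies in $S_{k}(\Gamma_{0}(N))$.

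For (ii), I invoke Bol's identity
\[
D^{k-1} = \tfrac{1}{(-4\pi)^{k-1}}\, R_{k-2}\circ R_{k-4}\circ\cdots\circ R_{2-k},
\]
where $R_{\kappa} := 2i\partial_{z} + \kappa/y$ is the Maass raising operator sending weight $\kappa$ to weight $\kappa+2$; iterating $k-1$ times from weight $2-k$ lands at weight $k$. Combined with (iii) we have $D^{k-1}(f) = D^{k-1}(f^{+})$, which is an honest Laurent $q$-series, and the polynomial principal-part bound in the definition of $H_{2-k}$ at each cusp places the result in $M_{k}^{!}(\Gamma_{0}(N))$. For (iii), the identity $\xi_{2-k}(f^{+}) = 0$ is immediate from $\partial_{\bar{z}}f^{+} = 0$; for $D^{k-1}(f^{-}) = 0$, the expansion
\[
\Gamma(k-1,\,4\pi|n|y) = (k-2)!\, e^{-4\pi|n|y}\sum_{j=0}^{k-2}\frac{(4\pi|n|y)^{j}}{j!}
\]
rewrites each summand of $f^{-}$ as $\overline{q}^{|n|}$ times a polynomial in $y$ of degree at most $k-2$; since $\partial_{z}\overline{q} = 0$ and $\partial_{z}y = -i/2$, every application of $D$ strictly lowers the $y$-degree, so $D^{k-1}$ annihilates the term.

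The main obstacle is (iv), surjectivity of $\xi_{2-k}$. My plan is to produce preimages via Maass--Poincar\'e series: for each cusp $\mathfrak{a}$ of $\Gamma_{0}(N)$ and each integer $m \geq 1$, averaging a suitably normalized weight-$(2-k)$ Whittaker function $\mathcal{M}_{1-k/2}(-4\pi m y)\,e^{-2\pi i m x}$ over $\Gamma_{\mathfrak{a}}\backslash\Gamma_{0}(N)$ produces a harmonic Maass--Poincar\'e series $F_{\mathfrak{a},m}\in H_{2-k}(\Gamma_{0}(N))$ whose image under $\xi_{2-k}$ is, by an explicit Fourier-coefficient calculation, a nonzero scalar multiple of the classical holomorphic Poincar\'e series $P_{\mathfrak{a},m}\in S_{k}(\Gamma_{0}(N))$. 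Since the $\{P_{\mathfrak{a},m}\}$ span $S_{k}(\Gamma_{0}(N))$ by a standard Petersson inner-product argument, surjectivity follows. The technical crux of this step is the convergence and analytic continuation in the spectral parameter needed to build $F_{\mathfrak{a},m}$, which is especially delicate in small weight; for this I would follow the Bruinier--Funke construction.
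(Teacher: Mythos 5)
The paper does not prove this statement at all: it is imported verbatim from Bruinier--Ono--Rhoades \cite{bruinier2008differential}, so there is no in-paper argument to compare against. Judged on its own terms, your proposal is essentially the standard proof from that literature and is correct in all its computational parts: the intertwining computation for $\xi_{2-k}$ (using $\Im(\gamma z)=y/|cz+d|^{2}$ and $\partial_{\bar z}\overline{g}=\overline{\partial_z g}$), the identity $2iy^{k}\partial_{\bar z}(\xi_{2-k}f)=\overline{\Delta_{2-k}f}$ giving holomorphy of the image, Bol's identity for $D^{k-1}$, and the finite expansion of $\Gamma(k-1,4\pi|n|y)$ showing that each term of $f^{-}$ is $\overline{q}^{|n|}$ times a polynomial in $y$ of degree at most $k-2$, hence killed by $D^{k-1}$. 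The only place where you lean on the literature rather than supply an argument is the surjectivity step: the convergence of the weight-$(2-k)$ Maass--Poincar\'e series at the relevant spectral point fails precisely in the case $k=2$ (weight $0$), which is one of the two weights actually used in this paper (Lemma \ref{list of 0} forces $k\in\{2,4\}$), so the ``delicate analytic continuation'' you defer is not a corner case here. Note also a small misattribution: Bruinier--Funke establish surjectivity of $\xi_{2-k}$ by a Serre-duality argument rather than by Poincar\'e series; the Poincar\'e-series route you sketch is the one taken in Bruinier's book and in work of Bringmann--Ono, and either reference would close the gap. With that citation made precise, the proof is complete.
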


The image of $f$ by $\xi_{2-k}$ is called the shadow of $f^{+}$.

 \begin{thm}[{\cite[Lemma 2.3]{bringmann2012coefficients}}]\label{thm: pri nonzero}
If $f \in H_{2-k}(\Gamma_{0}(N))$ has the property that $\xi_{2-k}(f)\not=0$, then the principal part of $f$ is nonconstant for at least one cusp.

 \end{thm}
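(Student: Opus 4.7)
The plan is a proof by contradiction via the Bruinier--Funke pairing between harmonic Maass forms and cusp forms. Assume the principal part of $f$ is constant at every cusp $\kappa$ of $\Gamma_0(N)$, i.e.\ $C_f^+(n,\kappa)=0$ for all $n<0$ at every cusp; the goal is to deduce $\xi_{2-k}(f)=0$, contradicting the hypothesis.

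Set $g:=\xi_{2-k}(f)\in S_k(\Gamma_0(N))$ and consider the Petersson norm $\langle g,g\rangle_{\mathrm{Pet}}=\langle g,\xi_{2-k}(f)\rangle_{\mathrm{Pet}}$. The central tool is the identity
\[
\langle g,\xi_{2-k}(f)\rangle_{\mathrm{Pet}}=\sum_{\kappa}\sum_{n\geq 1}C_f^+(-n,\kappa)\,\overline{a_g(n,\kappa)},
\]
where $\kappa$ ranges over the cusps of $\Gamma_0(N)$ and $a_g(n,\kappa)$ denotes the $n$-th Fourier coefficient of $g$ at $\kappa$. This formula is obtained by applying Stokes' theorem to the form $d\bigl(f(z)\,\overline{g(z)}\,y^{k-2}\,dz\bigr)$ on a fundamental domain truncated near every cusp. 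The interior integral reproduces $\langle g,\xi_{2-k}(f)\rangle_{\mathrm{Pet}}$ because $\xi_{2-k}$ is essentially the antiholomorphic derivative weighted by $2iy^{2-k}$; the boundary integrals around the cusps, computed with the Fourier expansions of $f$ and $g$, pick out exactly the advertised pairing of negative-index principal-part coefficients of $f$ against Fourier coefficients of $g$. All other contributions (from $f^-$, which decays exponentially, and from positive-frequency terms of $f^+$ paired against conjugate-positive-frequency terms of $g$) vanish as the truncation parameter tends to infinity.

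Under the standing assumption every coefficient $C_f^+(-n,\kappa)$ with $n\geq 1$ vanishes, so the right-hand side is zero. Hence $\|g\|_{\mathrm{Pet}}^2=0$, and positive definiteness of the Petersson inner product on $S_k(\Gamma_0(N))$ forces $g=\xi_{2-k}(f)=0$, the desired contradiction. The only substantive step is the Stokes' theorem computation that yields the pairing formula; the remainder is bookkeeping. Observe that the constant terms $C_f^+(0,\kappa)$ of the principal parts play no role, because they would be paired against the (vanishing) constant Fourier coefficients of the cusp form $g$, which is consistent with the statement asking only for a \emph{nonconstant} principal part at some cusp.
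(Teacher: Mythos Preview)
Your proof is correct and follows the standard argument. Note, however, that the paper does not supply its own proof of this statement: it is quoted verbatim as \cite[Lemma~2.3]{bringmann2012coefficients} without argument. That said, the pairing identity you derive via Stokes' theorem is precisely the content of Lemma~\ref{Brunier} (cited later in the paper from \cite[Proposition~5.11]{bringmann2017harmonic}), so your approach is exactly the one the paper's sources use, and it dovetails with the tools already assembled in Section~5.
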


 \begin{dfn}\label{good}
 Let $g \in S_{k}( \Gamma _{0}( N) )$ be a normalized newform and $F_{g}$ be the number field obtained by adjoining the coefficients of $g$ to $\mathbb{Q}$.
 We say that a harmonic Maass form $f \in H_{2-k} ( \Gamma _{0}( N) )$ is good for $g$ if it satisfies the following properties.

 (1)The principal part of $f$ at the $\infty$ belongs to $F_{g}[q^{-1}]$.
 
 (2)The principal part of $f$ at the other cusps of $\Gamma_{0}(N)$ are constant.
 
 (3)We have that $\xi _{2-k}( f) =\dfrac{g}{\left\| g\right\| ^{2}}$.
  \end{dfn}
 
 \begin{thm}[{\cite[Theorem 1.3]{bruinier2008differential}}]\label{cof of mock}
 Let $g \in S_{k}( \Gamma _{0}( N) )$ be a normalized newform with complex multiplication. 
 If $f \in H_{2-k} ( \Gamma _{0}( N) )$ is good for $g$, 
 then there exists a positive integer $M$ such that all coefficients of $f^{+}$ are in $F_{g}(\zeta_{M})$, where $\zeta_{M}:=e^{\frac{2\pi i}{M}}$.
 \end{thm}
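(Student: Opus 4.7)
The plan is to reduce the general case to the existence of one good lift with manifestly algebraic Fourier coefficients, and to produce such a lift from the theta-function description of CM newforms combined with Shimura's period relations.

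\textbf{Step 1 (Rigidity of good lifts).} I first show that the difference of any two lifts good for $g$ lies in $M_{2-k}^{!}(\Gamma_{0}(N))$ and has $q$-expansion in $F_{g}((q))$. If $f_{1}, f_{2}$ are both good for $g$, then by part (3) of \ref{good} their shadows agree, so $\xi_{2-k}(f_{1}-f_{2})=0$ and \ref{xi} forces $h:=f_{1}-f_{2}$ to lie in $M_{2-k}^{!}(\Gamma_{0}(N))$; by parts (1)-(2) of \ref{good} the principal part of $h$ at every cusp lies in $F_{g}[q^{-1}]$. Since $X_{0}(N)$ is defined over $\mathbb{Q}$, the space $M_{2-k}^{!}(\Gamma_{0}(N))$ carries a $\mathbb{Q}$-structure compatible with $q$-expansions at every cusp. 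For $k\geq 2$ the principal-part-at-all-cusps map is injective, since a weakly holomorphic form with vanishing principal part at every cusp is holomorphic and vanishes at every cusp, hence lies in $S_{2-k}(\Gamma_{0}(N))=0$. Consequently the $F_{g}$-rational principal part of $h$ determines a unique $F_{g}$-rational weakly holomorphic form, which must equal $h$.

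\textbf{Step 2 (Construction of one algebraic good lift).} Since $g$ has CM by $K$, Hecke's construction writes $g=\theta_{\psi}$ as the theta series attached to an algebraic Hecke character $\psi$ of $K$ of infinity type $(k-1,0)$. I would build a specific $f_{0}$ good for $g$ as a regularized theta lift of $\theta_{\psi}$ (or equivalently as a finite linear combination of Maass-Poincar\'e series of weight $2-k$ attached to the principal parts dictated by $\psi$), normalized so that $\xi_{2-k}(f_{0})=g/\|g\|^{2}$ as required in part (3) of \ref{good}. The resulting $f_{0}^{+}$ admits a closed-form expression whose Fourier coefficients are sums of critical $L$-values $L(\overline{\psi}\cdot\chi, j)$ of $\psi$ twisted by finite-order characters $\chi$, divided by a fixed CM period $\Omega_{\psi}$. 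By the theorems of Damerell and Shimura on algebraicity of critical $L$-values of Hecke characters of imaginary quadratic fields, each such ratio lies in $F_{g}(\zeta_{M})$, where the cyclotomic factor absorbs Gauss sums attached to the twists $\chi$ and $M$ is controlled by the conductor of $\psi$ and the level $N$. Shimura's period relation $\Omega_{\psi}\overline{\Omega_{\psi}}\sim_{\overline{\mathbb{Q}}}\|g\|^{2}$ cancels the $\|g\|^{2}$ in the normalization, leaving algebraic coefficients in $F_{g}(\zeta_{M})$.

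\textbf{Step 3 (Conclusion).} For any good lift $f$, write $f=f_{0}+h$ with $h$ as in Step 1. Then $f^{+}=f_{0}^{+}+h$ and both summands have coefficients in $F_{g}(\zeta_{M})$, giving the result.

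\textbf{Main obstacle.} The technical heart is Step 2: exhibiting $f_{0}^{+}$ with an explicit coefficient formula in which every transcendental factor is either a critical $L$-value governed by the CM period $\Omega_{\psi}$ or is cancelled by $\|g\|^{2}$ via the Shimura period relation. This is the step where the CM hypothesis is indispensable, since in the non-CM setting no analogous period relation is available and algebraicity is not expected to hold in general.
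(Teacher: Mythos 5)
First, note that the paper does not prove this statement at all: it is imported verbatim as \cite[Theorem 1.3]{bruinier2008differential}, so the only meaningful comparison is with the proof in Bruinier--Ono--Rhoades. Your overall architecture --- establish rigidity of good lifts and then exhibit a single good lift whose holomorphic part is manifestly algebraic --- is indeed the right shape and is consistent with how the CM case is handled in that source. Step 1 is essentially sound, with one small inaccuracy: condition (2) of Definition \ref{good} only forces the principal parts of $h=f_1-f_2$ at the cusps other than $\infty$ to be \emph{constant}, not to lie in $F_g[q^{-1}]$; the descent argument still goes through because $h$ is already determined by its polar parts at all cusps together with its constant term at $\infty$ (the difference of two candidates is a holomorphic form of weight $2-k\leq 0$ vanishing at $\infty$, hence zero), but you should phrase the injectivity claim in terms of that data rather than of "the principal part at every cusp".

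The genuine gap is Step 2, which you correctly identify as the heart of the matter but do not actually carry out. The assertion that $C_{f_0^+}(n)$ is a sum of \emph{critical} values $L(\overline{\psi}\chi,j)$ divided by a CM period is not substantiated, and it does not match what the known constructions produce: coefficients of Maass--Poincar\'e series are infinite Kloosterman--Bessel sums with no known closed $L$-value form, and in the regularized-theta-lift picture the holomorphic-part coefficients that arise in this weight range are typically \emph{derivatives} of $L$-functions (hence not covered by Damerell--Shimura algebraicity). The actual mechanism in Bruinier--Ono--Rhoades is more elementary and more specific to CM: one first uses $D^{k-1}$ and the $F_g$-rationality of the principal part to write $D^{k-1}(f)=F_0+\alpha g$ with $F_0$ defined over $F_g$, reducing everything to the algebraicity of the single constant $\alpha$ (plus the constant term of $f^+$); one then completes the partial theta series $\theta_{\psi,\mathfrak{A}}$ attached to the ideal classes by explicit incomplete-Gamma expressions, so that the only transcendental period entering $f_0^+$ is $\|g\|^2$ itself, which is cancelled by the normalization $\xi_{2-k}(f)=g/\|g\|^{2}$, the roots of unity $\zeta_M$ arising from the values of $\psi$. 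Without either this explicit completion or a proved coefficient formula of the type you posit, Step 2 remains a plan rather than a proof, and the theorem is not established by your argument.
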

 
 \begin{lem}[{\cite[Proposition 2.]{hanson2022cusp}}]\label{list of 0}
 
 The one-dimensional spaces $S_{k}(\Gamma_{0}(N))$ which satisfies the assumption of Theorem \ref{main} are only
\begin{align*}
S_{2}( \Gamma _{0}( 27) ), S_{2}( \Gamma _{0}( 32) ), S_{2}( \Gamma _{0}( 36) ), S_{2}( \Gamma _{0}( 49) ), 
 S_{4}( \Gamma _{0}( 9) ).
 \end{align*}

In addition, every genus of $X_{0}(N)$ is $0$ or $1$ where $N=27, 32, 36, 49, 9$.
 
 \end{lem}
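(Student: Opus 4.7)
My plan is to reduce the classification to a finite computation by combining the dimension formula for $S_k(\Gamma_0(N))$ with the parametrization of CM newforms by Hecke characters. A CM newform $g \in S_k(\Gamma_0(N))$ with trivial nebentypus arises from a Hecke character $\psi$ of an imaginary quadratic field $K$ of infinity type $(k-1, 0)$ whose restriction to $\Z$ agrees with the Kronecker character $\chi_K$ of $K/\Q$; the corresponding level is $N = |d_K| \cdot N_{K/\Q}(\mathfrak{f}_\psi)$, where $\mathfrak{f}_\psi$ is the conductor of $\psi$. The requirement that $\psi$ descend from idèles to a character of ideals forces either $|\mathcal{O}_K^\times|$ to divide $k-1$ (when $\mathfrak{f}_\psi$ is trivial) or a nontrivial conductor; combined with the hypothesis that $k$ is even, this restricts the candidate pairs $(k,N)$ to a short list.

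For $k = 2$, the condition $\dim_{\C} S_2(\Gamma_0(N)) = 1$ is equivalent to $X_0(N)$ having genus one, which restricts $N$ to the twelve classical levels $N \in \{11, 14, 15, 17, 19, 20, 21, 24, 27, 32, 36, 49\}$. Cross-checking with the tables of CM elliptic curves, i.e.\ identifying which of the associated $j$-invariants lie among the thirteen rational CM values, leaves exactly $N \in \{27, 32, 36, 49\}$, with respective CM fields $\Q(\sqrt{-3})$, $\Q(i)$, $\Q(\sqrt{-3})$, $\Q(\sqrt{-7})$. For $k = 4$, the Eichler--Selberg dimension formula yields $\dim_{\C} S_4(\Gamma_0(N)) = 1$ only for $N \in \{5, 6, 7, 8, 9\}$; checking the first several Fourier coefficients of each candidate newform, or directly listing the admissible Hecke characters at each level, shows only $N = 9$ supports a CM newform, namely $g = \eta(3z)^8$ with CM by $\Q(\sqrt{-3})$.

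For $k \geq 6$ even, no candidate survives: the unit-group obstruction above forces $\mathfrak{f}_\psi$ to be nontrivial (since $|\mathcal{O}_K^\times| \in \{2, 4, 6\}$ is always even while $k-1$ is odd), hence $N \geq |d_K| \cdot q$ with $q \geq 2$ the smallest prime-ideal norm in $\mathcal{O}_K$, and the dimension formula then gives $\dim_{\C} S_k(\Gamma_0(N)) \geq 2$ for all admissible $(K, k, N)$. A short case analysis over the finitely many pairs $(K, k)$ with $|d_K|$ small and $k$ bounded by an explicit threshold completes the elimination.

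Finally, the genus claim is a table lookup: $X_0(9)$ has genus $0$ since $\dim_{\C} S_2(\Gamma_0(9)) = 0$, and $X_0(N)$ has genus $1$ for $N \in \{27, 32, 36, 49\}$ by the $k=2$ analysis, so each genus is $0$ or $1$. The main obstacle is the uniform treatment of $k \geq 6$; a fully self-contained argument requires careful bookkeeping of the Hecke characters and the Eichler--Selberg formula, along the lines of the tabulation carried out in \cite{hanson2022cusp}.
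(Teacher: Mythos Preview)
The paper does not supply its own proof of this lemma; it simply quotes Proposition~2 of \cite{hanson2022cusp}. Your sketch reconstructs essentially the same tabulation carried out there---enumerating the one-dimensional spaces $S_k(\Gamma_0(N))$ via the dimension formula and then checking which of the candidate newforms have CM---and is correct in outline, with the genus claim following immediately from the $k=2$ step.
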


 \begin{dfn}
 We define two subspaces of weakly holomorphic modular forms
 \begin{align*}
&M_{k}^{\# }( \Gamma _{0}( N) ) :=\{ f\in M_{k}^{!}( \Gamma _{0}( N) )  \mid  \text{$f$ is holomorphic at every cusp except possibly $\infty$}\},
\\
&S_{k}^{\# ,0}( \Gamma _{0}( N) ) :=\{ f\in M_{k}^{\#}( \Gamma _{0}( N) )  \mid \text{$f$ vanishes at every cusp except possibly $\infty$}\}.
 \end{align*}
 \end{dfn}
 
 \begin{lem}[{\cite[Theorem 1.3.]{hwang2021arithmetic}}] \label{isom}
 Let k be a positive even integer and N be a positive integer for which the
genus of $\Gamma_{0}(N)$ is zero or one.
We define the space $\widehat{S_{k}}^{\# ,0} ( \Gamma _{0}( N) )$ by
\begin{align*}
\widehat{S_{k}}^{\# ,0} ( \Gamma _{0}( N) ) :=\dfrac{S_{k}^{\# ,0}( \Gamma_{0}( N) ) }{D^{k-1}( M_{2-k}^{\# }( \Gamma _{0}( N) )) \oplus S_{k}( \Gamma_{0}( N) ) }.
 \end{align*}
 Then
 \begin{align*}
\dim \widehat{S_{k}}^{\# ,0} ( \Gamma _{0}( N) ) = \dim S_{k}( \Gamma _{0}( N) ).
 \end{align*}
 
 \end{lem}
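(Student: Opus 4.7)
The plan is to (i) confirm that the denominator is a genuine direct sum inside $S_k^{\#,0}(\Gamma_0(N))$, and (ii) compute the codimension by analyzing principal parts at $\infty$ and invoking Riemann--Roch on $X_0(N)$.

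First, I would verify both inclusions. The inclusion $S_k(\Gamma_0(N)) \subseteq S_k^{\#,0}(\Gamma_0(N))$ is immediate. For $D^{k-1}(M_{2-k}^{\#}(\Gamma_0(N))) \subseteq S_k^{\#,0}(\Gamma_0(N))$, the Bol identity $D^{k-1}(f|_{2-k}\gamma) = (D^{k-1}f)|_k\gamma$ implies that at any cusp $\mathfrak{c}\neq\infty$ where $f\in M_{2-k}^{\#}$ has local expansion $\sum_{n\geq 0} a_n q_\mathfrak{c}^n$, the image $D^{k-1}f$ has expansion $\sum_{n\geq 1} n^{k-1} a_n q_\mathfrak{c}^n$, which vanishes at $\mathfrak{c}$. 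Directness follows by contradiction: if $D^{k-1}(h)=g\in S_k$ with $h\in M_{2-k}^{\#}$, then the vanishing of $g$ at $\infty$ forces $n^{k-1} a_n(h)=0$ for all $n<0$, so $h$ is holomorphic at $\infty$. Combined with its holomorphy at the other cusps, $h \in M_{2-k}(\Gamma_0(N))$, which is zero for $k\geq 3$ and consists only of constants for $k=2$; in either case $D^{k-1}(h)=0$.

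Second, I would reduce to an analysis of principal parts at $\infty$. Since any $F\in S_k^{\#,0}$ is determined modulo $S_k$ by its principal part together with its constant term at $\infty$, the natural map
\[
\pi : S_k^{\#,0}(\Gamma_0(N))/S_k(\Gamma_0(N)) \longrightarrow q^{-1}\mathbb{C}[q^{-1}]\oplus\mathbb{C}, \qquad F \longmapsto \Bigl(\sum_{n<0} a_F(n)q^n,\ a_F(0)\Bigr),
\]
is injective. Under $\pi$, the subspace $D^{k-1}(M_{2-k}^{\#})$ maps to principal parts whose $n$-th coefficient is $n^{k-1}$ times the $n$-th coefficient of some $h\in M_{2-k}^{\#}$, paired with zero constant term. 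I would filter both $S_k^{\#,0}$ and $M_{2-k}^{\#}$ by pole order at $\infty$ (say $\leq M$), and apply Riemann--Roch on $X_0(N)$ to the weight-$k$ and weight-$(2-k)$ modular line bundles twisted by appropriate divisors supported on the cusps. Because the genus of $X_0(N)$ is at most $1$, for $M$ sufficiently large the $H^1$ obstructions are under explicit control, and the difference
\[
\dim S_k^{\#,0}(M) - \dim D^{k-1}(M_{2-k}^{\#}(M)) - \dim S_k(\Gamma_0(N))
\]
stabilizes to exactly $\dim S_k(\Gamma_0(N))$, yielding the claim.

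The main obstacle is this final Riemann--Roch bookkeeping: correctly accounting for the degrees of the weight-$k$ and weight-$(2-k)$ modular line bundles at the cusps and at any elliptic fixed points on $X_0(N)$ (relevant for some of the levels in \Cref{list of 0}), and verifying that the genus $\leq 1$ hypothesis is precisely what forces the $H^1$ contributions to align so that the stabilized codimension equals $\dim S_k(\Gamma_0(N))$ with no correction.
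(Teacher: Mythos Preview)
The paper does not prove this lemma at all; it simply quotes it from the cited reference \cite[Theorem~1.3]{hwang2021arithmetic}. There is therefore no proof in the paper to compare against.

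Evaluated on its own, your part (i) on directness of the sum is correct and complete. Part (ii), however, is only a plan rather than a proof. You set up the principal-part map correctly and identify the right tool (Riemann--Roch on $X_0(N)$ applied to the weight-$k$ and weight-$(2-k)$ line bundles twisted by cuspidal divisors), but you then explicitly stop, calling the actual computation ``the main obstacle.'' That computation --- tracking the degrees at the cusps and elliptic points, controlling the $H^1$ terms, and verifying that the stabilized difference of dimensions equals exactly $\dim S_k(\Gamma_0(N))$ under the genus $\leq 1$ hypothesis --- \emph{is} the content of the cited theorem. What you have written is a correct roadmap toward the proof in \cite{hwang2021arithmetic}, but it is not itself a proof: none of the Riemann--Roch bookkeeping has been carried out, and there is no argument given for why the answer should be $\dim S_k$ rather than some other constant.
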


\begin{lem}[{\cite[Theorem 1.]{hanson2022cusp}}]\label{hanson}
Suppose that $S_{k}( \Gamma _{0}( N) )$ is one-dimensional and that the unique normalized
cusp form $g$ has complex multiplication by $K$.
\\
There exists
\begin{align*}
F=-q^{-1}+\sum ^{\infty }_{n=2}C_{F}( n) q^{n} \in S_{k}^{\# ,0}( \Gamma _{0}( N) ) \cap \mathbb{Z}[[q]][q^{-1}]
\end{align*}
such that for every odd prime $p$ which is inert in $\mathcal{O}_{K}$ and every integer $m \geq 0$ we have that
\begin{align*}
v_{p}( C_{F}( p^{2m+1}) )=(k-1)m.
\end{align*}
\end{lem}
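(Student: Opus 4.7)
The plan is first to apply Lemma \ref{list of 0} to reduce the problem to the five explicit pairs $(k,N) \in \{(2,27),(2,32),(2,36),(2,49),(4,9)\}$. In each of these cases the modular curve $X_0(N)$ has genus $0$ or $1$, so meromorphic modular forms on $\Gamma_0(N)$ admit convenient explicit descriptions (a polynomial in a Hauptmodul times a fixed modular form for the four genus--zero cases, and an analogous eta-quotient description for $N=49$). Using these I would construct, case by case, an eta-quotient $F \in S_k^{\#,0}(\Gamma_0(N))$ with principal part $-q^{-1}$ at $\infty$, vanishing at every other cusp, and integer $q$-expansion coefficients. In the $(k,N)=(4,9)$ case this is essentially furnished by Theorem \ref{example}\,(1): the form $-\eta(3z)^{8}\bigl(\eta(z)^3/\eta(9z)^3 + 3\bigr)^{2}$.

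For the valuation formula, the key arithmetic input is that since $g$ has CM by $K$ and $p$ is inert in $\mathcal{O}_K$, one has $a_{p^{2m+1}}(g) = 0$ for every $m \geq 0$, equivalently $g\mid T_{p^{2m+1}} = 0$. I would exploit this via the standard Hecke recursion
\[
T_{p^{n+1}} = T_p T_{p^n} - p^{k-1} T_{p^{n-1}}
\]
applied to $F$, combined with the one-dimensionality of the quotient $\widehat{S_k}^{\#,0}(\Gamma_0(N))$ from Lemma \ref{isom}. Iterating and projecting to this quotient produces a $p$-adic recursion relation among the coefficients $C_F(p^{2m+1})$: the vanishing of $g \mid T_{p^{2m+1}}$ forces the image of $F \mid T_{p^{2m+1}}$ in the one-dimensional quotient to vanish, which translates into a congruence modulo an appropriate power of $p$ for $C_F(p^{2m+1})$. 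A straightforward induction on $m$, with the $p^{k-1}$ factor entering from the Hecke recursion at each step, then gives the lower bound $v_p(C_F(p^{2m+1})) \geq (k-1)m$.

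The main obstacle is upgrading this inequality to the equality $v_p(C_F(p^{2m+1})) = (k-1)m$, which reduces to the base case $v_p(C_F(p)) = 0$, i.e.\ $p \nmid C_F(p)$ for \emph{every} odd inert prime $p$. This is not automatic from the construction of $F$ and requires, in each of the five cases, an explicit arithmetic identity for $C_F(p)$ relating it to Fourier coefficients of $g$ and the auxiliary eta-quotients, together with an elementary but genuinely case-sensitive mod--$p$ analysis to exclude sporadic vanishing. In the $(k,N)=(4,9)$ case this unconditional non-vanishing was achieved in \cite{hanson2022cusp}, strengthening the finite-range verification of \cite{guerzhoy2010p}; I would follow an analogous strategy in each of the remaining four spaces, which is where the bulk of the work of the proof is concentrated.
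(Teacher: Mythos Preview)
The paper does not actually prove this lemma: its ``proof'' consists entirely of the observation that the valuation statement and the integrality of $F$ are exactly \cite[Theorem~1]{hanson2022cusp}, and that the only thing not literally stated there --- namely $F \in S_k^{\#,0}(\Gamma_0(N))$ --- follows by inspecting the explicit definition of the form $F_1$ in \cite[Proposition~3]{hanson2022cusp}. In other words, the paper treats this lemma as a black-box import from the literature, adding only the one-line check that the cited form lies in the correct subspace.

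Your proposal, by contrast, is a sketch of a from-scratch proof that would essentially reproduce the content of \cite{hanson2022cusp}. The outline is sound and you correctly identify where the genuine difficulty lies: the lower bound $v_p(C_F(p^{2m+1})) \geq (k-1)m$ follows from the Hecke recursion and the CM vanishing $a_{p^{2m+1}}(g)=0$, while the equality hinges on the base case $p \nmid C_F(p)$, which requires a case-by-case arithmetic argument. That is indeed the heart of the Hanson--Jameson paper. One small slip: you refer to ``the four genus-zero cases'' with $N=49$ as the exception, but in fact only $X_0(9)$ has genus $0$; the curves $X_0(27), X_0(32), X_0(36), X_0(49)$ all have genus $1$. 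This does not break your strategy, since genus-$1$ levels still admit explicit eta-quotient descriptions, but the parenthetical is inaccurate.

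In short: your plan is reasonable and not wrong, but it is doing far more than the paper does. For the purposes of this paper the appropriate ``proof'' is simply to cite \cite[Theorem~1]{hanson2022cusp} and verify from the construction there that $F$ vanishes at all cusps other than $\infty$.
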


\begin{proof}
The above result except for $F \in S_{k}^{\# ,0}( \Gamma _{0}( N) )$ is clear by \cite[Theorem 1.]{hanson2022cusp}.
The function $F$ is defined by $F_{1}$ in \cite[Proposition3]{hanson2022cusp}.
It is clear that  $F_{1} \in S_{k}^{\# ,0}( \Gamma _{0}( N) )$ from the definition of $F_{1}$.
\end{proof}

\begin{remark}
When $(k,N)=(4,9)$, we have $F=-\eta ( 3z) ^{8} ( \dfrac{\eta ( z) ^{3}}{\eta ( 9z) ^{3}}+3) ^{2}$. (See Theorem \ref{example})
\end{remark}

\begin{lem}
Let $g$ be a normalized newform in $S_{k}( \Gamma _{0}( N) )$.

If $f \in H_{2-k} ( \Gamma _{0}( N) )$ is good for $g$, 
then 
$D^{k-1}(f) \in S_{k}^{\# ,0}( \Gamma _{0}( N) )$.
\end{lem}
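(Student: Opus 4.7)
The plan is to combine three ingredients: Theorem \ref{xi} (which gives $D^{k-1}(f) \in M_k^{!}(\Gamma_0(N))$ and $D^{k-1}(f^-) = 0$), the definition of ``good for $g$'' (which controls the principal parts at cusps other than $\infty$), and Bol's identity $D^{k-1}(f|_{2-k}\gamma) = D^{k-1}(f)|_{k}\gamma$ for $\gamma \in \SL_2(\mathbb{R})$ and integer $k \geq 2$. Since membership in $S_k^{\#,0}(\Gamma_0(N))$ amounts to (i) being a weakly holomorphic modular form of weight $k$ on $\Gamma_0(N)$, and (ii) vanishing at every cusp other than $\infty$, condition (i) is already handed to us by Theorem \ref{xi}. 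So the whole content of the proof is to verify vanishing at every cusp $s \neq \infty$.

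First I would fix such a cusp $s$ and a scaling matrix $\sigma_s \in \SL_2(\mathbb{R})$ sending $\infty$ to $s$. The transformed harmonic Maass form $f|_{2-k}\sigma_s$ admits a decomposition into its holomorphic and non-holomorphic parts at $\infty$, and by Definition \ref{good}(2) the principal part of $f$ at $s$ is constant, so the holomorphic part takes the form
\begin{align*}
(f|_{2-k}\sigma_s)^+(z) = c_s + \sum_{n \geq 1} a_n^{(s)} q_s^{n},
\end{align*}
with no negative powers of the local parameter $q_s$. Applying $D^{k-1}$ on the Fourier expansion kills both the constant term $c_s$ (since the coefficient is multiplied by $n^{k-1}$ with $n=0$) and the entire non-holomorphic part (by Theorem \ref{xi}), yielding
\begin{align*}
D^{k-1}(f|_{2-k}\sigma_s)(z) = \sum_{n \geq 1} n^{k-1} a_n^{(s)} q_s^{n},
\end{align*}
which visibly vanishes at $\infty$.

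Next I invoke Bol's identity to rewrite $D^{k-1}(f|_{2-k}\sigma_s) = D^{k-1}(f)|_k \sigma_s$. The left-hand side was just shown to vanish at $\infty$, which is by definition the statement that $D^{k-1}(f)$ vanishes at the cusp $s$. Since $s$ was an arbitrary cusp other than $\infty$, this proves $D^{k-1}(f) \in S_k^{\#,0}(\Gamma_0(N))$.

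The one step that requires a little care (and which I view as the main technical point) is the appeal to Bol's identity: one has to check that the formula applies in this mixed ``almost holomorphic'' setting, i.e.\ that differentiating $k-1$ times commutes with the weight-$(2-k)$ slash in the way Bol established for holomorphic forms, even when $f$ is only a harmonic Maass form. This is routine because $D^{k-1}$ is computed as $(2\pi i)^{-(k-1)} \partial_z^{k-1}$ and the identity $\partial_z^{k-1}((cz+d)^{k-2}F(\gamma z)) = (cz+d)^k (\partial_z^{k-1}F)(\gamma z)$ for $\gamma = \smat{a & b \\ c & d} \in \SL_2(\mathbb{R})$ is a purely formal computation in $z$ (holding $\bar z$ fixed), but it does require that care be taken to write out the $z$-dependence. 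Everything else is a direct reading of the definitions.
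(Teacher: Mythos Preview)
Your proof is correct and follows essentially the same route as the paper's: both compute the Fourier expansion of $D^{k-1}(f)$ at each cusp $s \neq \infty$ by applying $D^{k-1}$ term-by-term to the local expansion of (the holomorphic part of) $f$ at $s$, using goodness to ensure there are no negative powers and observing that $D^{k-1}$ annihilates the constant term. The only difference is that you explicitly name Bol's identity as the justification for commuting $D^{k-1}$ with the slash by the scaling matrix, whereas the paper simply asserts the resulting expansion $\sum (n/h)^{k-1} C_s(n) q_h^n$ without naming the underlying identity.
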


\begin{proof}
 We have $D^{k-1}(f) \in M_{k}^{!}( \Gamma _{0}( N) )$ by \cite[Theorem 1.2]{bruinier2008differential}.
 We will show that the constant term of $D^{k-1}(f)$ is zero at every cusp of $\Gamma _{0}( N)$.
Let $s$ be a cusp of $\Gamma _{0}( N)$, and $h$ be the width of $s$.
\\
We denote the Fourier expansion of $f^{+}$ at $s$ by $\sum _{n\gg -\infty }C_{s}( n) q_{h}^{n}$ where $q_{h}:= e^{\frac{2\pi i}{h}}$.
Then the Fourier expansion of $D^{k-1}(f)$ at $s$ is 
\begin{align*}
D^{k-1}(f)=D^{k-1}(f^{+})=\sum _{n\gg -\infty }(\dfrac{n}{h})^{k-1}C_{s}( n) q_{h}^{n}.
\end{align*}

Therefore the constant term of $D^{k-1}(f)$ is zero at every cusp of $\Gamma _{0}( N)$.
\\
We will show that $D^{k-1}(f)$ is holomorphic at every cusp exept for $\infty$.
Let $s \in \mathbb{Q}$ be a cusp of $\Gamma _{0}( N)$ and $h$ be the width of $s$.
We denote the Fourier expansion of $f^{+}$ at $s$ by $\sum _{n\gg -\infty }C_{s}( n) q_{h}^{n}$.
Since $f$ is good for $g$, $C_{s}( n)=0$ holds for all $n <0$.
Hence  $D^{k-1}(f)$ is holomorphic at every cusp exept for $\infty$.
\end{proof}

\section{$p$-adic properties of mock modular forms}

In this section, we recall  $p$-adic properties of mock modular forms.

 From now on, we fix an algebraic closure $\overline{\mathbb{Q}}_{p}$ along with embedding $\iota \colon \overline{\mathbb{Q}}\rightarrow \overline{\mathbb{Q}}_{p}$ for each prime number $p$. We denote the $p$-adic closure by $\mathbb{C}_{p}$ and normalize the $p$-adic valuation so that $v_{p}(p)=1$.
 
 Let $g \in S_{k}(\Gamma_{0}(N))$ be a normalized newform with complex multiplication by $K$ and $f \in H_{2-k}(\Gamma_{0}(N))$ be good for $g$. We denote the holomorphic part of $f$ by $f^{+}$. We define the Eichler integral of $g$ by
\begin{align*}
E_{g}( z) :=\sum _{n>0}n^{1-k}C_{g}( n) q^{n}
\end{align*}
where $C_{g}(n)$ denotes the $n$-th coefficient of $g$.
For $\gamma \in \mathbb{C}_{p}$, we define
\begin{align*}
\widetilde{\mathcal{F}}_{\gamma}:=f^{+}-\gamma E_{g|V(p)}.
\end{align*}
Let $\beta, \beta'$ be the roots of the polynomial $X^{2}-C_{g}(p)X+p^{k-1}$ such that $v_{p}( \beta ) \leq v_{p}( \beta ') $. 

\begin{lem}[{\cite[Proposition 2.3]{guerzhoy2010p}}]
Let $g$ be a normalized newform with complex multiplication by $K$ and $p$ is inert in $\mathcal{O}_{K}$. Then 
\begin{align*}
\displaystyle\lim _{m\rightarrow \infty }\dfrac{C_{D^{k-1}(f)}( p^{2m+1}) }{\beta ^{2m}}
\end{align*}
is convergence.
\end{lem}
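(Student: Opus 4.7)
The plan is to derive a two-term Hecke recurrence for the $p$-power coefficients $c_{m}:=C_{D^{k-1}(f)}(p^{m})$ and, after normalising by $\beta^{2j}$, to exhibit the odd-indexed subsequence as a Cauchy sequence in $\mathbb{C}_{p}$.

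The starting observation is that, because $p$ is inert in $\mathcal{O}_{K}$ and $g$ has complex multiplication by $K$, we have $C_{g}(p)=0$. Consequently $g\,|\,T(p)=0$, and the standard intertwining between $\xi_{2-k}$ and the Hecke operator $T(p)$ gives $\xi_{2-k}(f\,|\,T(p))=p^{1-k}\,\xi_{2-k}(f)\,|\,T(p)=p^{1-k}\,C_{g}(p)\,g/\|g\|^{2}=0$. Hence by Theorem \ref{xi},
\begin{align*}
\Phi:=f\,|\,T(p)\in M_{2-k}^{!}(\Gamma_{0}(N)).
\end{align*}
Moreover, $\Phi$ inherits algebraic Fourier coefficients from $f$ via Theorem \ref{cof of mock}; these lie in a fixed number field, so $v_{p}(C_{\Phi}(n))$ is uniformly bounded below by some constant $-M$.

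The second step is to transfer this identity to weight $k$ via $D^{k-1}$. A direct Fourier-coefficient computation shows $D^{k-1}(h\,|\,T^{2-k}(p))=p^{1-k}\,D^{k-1}(h)\,|\,T^{k}(p)$ for any formal $q$-series $h$, so applying $D^{k-1}$ to $\Phi=f\,|\,T(p)$ yields
\begin{align*}
D^{k-1}(f)\,|\,T^{k}(p)=p^{k-1}\,D^{k-1}(\Phi).
\end{align*}
Reading off the coefficient of $q^{p^{2j}}$ using the weight-$k$ Hecke formula $(h\,|\,T^{k}(p))(n)=C_{h}(pn)+p^{k-1}C_{h}(n/p)$ produces
\begin{align*}
c_{2j+1}+p^{k-1}c_{2j-1}=p^{(2j+1)(k-1)}\,C_{\Phi}(p^{2j}).
\end{align*}

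The last step is the $p$-adic estimate. Since $C_{g}(p)=0$, the roots satisfy $\beta^{2}=-p^{k-1}$, hence $\beta^{2j}=(-1)^{j}p^{j(k-1)}$. Dividing the recurrence by $\beta^{2j}$ and simplifying $p^{k-1}/\beta^{2}=-1$ produces the telescoping identity
\begin{align*}
\frac{c_{2j+1}}{\beta^{2j}}-\frac{c_{2j-1}}{\beta^{2j-2}}=(-1)^{j}\,p^{(j+1)(k-1)}\,C_{\Phi}(p^{2j}).
\end{align*}
Because $v_{p}(C_{\Phi}(p^{2j}))\geq -M$ and $k\geq 2$, the $p$-adic valuation of the right-hand side is at least $(j+1)(k-1)-M$, which tends to $+\infty$ with $j$. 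Therefore $\{c_{2j+1}/\beta^{2j}\}_{j\geq 0}$ is Cauchy in $\mathbb{C}_{p}$ and hence convergent.

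The main technical point requiring attention is the uniform lower bound on $v_{p}(C_{\Phi}(n))$ for the weakly holomorphic form $\Phi$; once this is in hand, the rest is an elementary telescoping estimate, and the precise normalisation constants in the $\xi_{2-k}$--Hecke intertwining are immaterial because they are multiplied by the vanishing factor $C_{g}(p)$.
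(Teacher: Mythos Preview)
The paper does not supply its own proof of this lemma; it is quoted from \cite[Proposition~2.3]{guerzhoy2010p} without argument. Your proof is correct and follows what is essentially the original strategy: exploit $C_{g}(p)=0$ to show that $\Phi:=f\,|\,T(p)$ lies in $M_{2-k}^{!}(\Gamma_{0}(N))$ with coefficients in the fixed number field $F_{g}(\zeta_{M})$, transfer the Hecke relation through $D^{k-1}$, and telescope. The technical point you flag at the end---the uniform lower bound $v_{p}(C_{\Phi}(n))\geq -M$---is precisely the content of Lemma~\ref{p-adic valuation} applied to $\Phi$ (after dividing out the factor $n^{k-1}$ contributed by $D^{k-1}$), so you may invoke it directly rather than leaving it as a caveat.
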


\begin{thm}[{\cite[Theorem 1.3]{bringmann2012mock}}]
Assume that $p \nmid N$ and  $p$ is inert in $\mathcal{O}_{K}$.
Then there exists exactly one $\alpha_{g} \in \mathbb{C}_{p}$ such that $\widetilde{\mathcal{F}}_{\alpha_{g}}$ is a $p$-adic modular form of weight $2-k$ and level $pN$, given by the $p$-adic limit 
\begin{align*}
\alpha_{g}=\displaystyle\lim _{m\rightarrow \infty }\dfrac{C_{D^{k-1}(f)}(p^{2m+1}) }{\beta ^{2m}}.
\end{align*}
\end{thm}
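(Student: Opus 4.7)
Given the preceding lemma, the candidate $\alpha := \lim_{m\to\infty} C_{D^{k-1}(f)}(p^{2m+1})/\beta^{2m}$ already exists in $\mathbb{C}_p$. My plan is to show that $\widetilde{\mathcal{F}}_\alpha$ is a $p$-adic modular form of weight $2-k$ on $\Gamma_0(pN)$, and that $\alpha$ is the unique such $\gamma$.

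First I would pass to weight $k$ by applying $D^{k-1}$. Using $D^{k-1}(E_{g|V(p)}) = g|V(p)$, one gets $D^{k-1}\widetilde{\mathcal{F}}_\gamma = D^{k-1}(f) - \gamma\, g|V(p) \in M_k^!(\Gamma_0(pN))$, so the mock object has been traded for a genuine weakly holomorphic weight-$k$ form. The key input I would use is Serre's theta-inversion principle: for any weakly holomorphic form $F = \sum a(n) q^n$ of weight $k$, the formal series $\sum_{p \nmid n} n^{1-k} a(n) q^n$ is a $p$-adic modular form of weight $2-k$, since Fermat approximates $n^{1-k}$ by positive integer powers of $n$ when $p\nmid n$, realizing the series as a $p$-adic limit of classical forms. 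Applied to $F = D^{k-1}\widetilde{\mathcal{F}}_\gamma$ this immediately gives $p$-adic modularity of the $p$-prime projection $(\widetilde{\mathcal{F}}_\gamma)^{(p)}$, independently of $\gamma$ (the correction term only contributes at $p$-power indices), so only the $p$-divisible part $V(p)\,U(p)\,\widetilde{\mathcal{F}}_\gamma$ remains to be controlled.

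To control it, I would iterate $U(p)$. The CM-inert hypothesis forces $C_g(p)=0$, hence $\beta^2 = -p^{k-1}$, and the Hecke relations give $U(p) g = -p^{k-1}\, g|V(p)$, $U(p)(g|V(p)) = g$, so $U(p)^2$ acts as $\beta^2$ on $\langle g, g|V(p)\rangle$. The plan is to show that $D^{k-1}(f)|U(p)^n / \beta^n$ converges $p$-adically to a linear combination of $g$ and $g|V(p)$; the component transverse to this plane gets annihilated $p$-adically by iterated $U(p)$ (the standard Hecke-orbit estimate), while the planar component grows exactly at the $\beta^n$ rate. The correction $-\gamma\,g|V(p)$ is the one-parameter family of adjustments inside the plane, and matching the $q$-coefficient of $D^{k-1}\widetilde{\mathcal{F}}_\gamma|U(p)^{2m+1}/\beta^{2m}$ to $0$ in the limit forces $\gamma = \alpha$ and yields convergence of every Fourier coefficient, proving the $p$-adic modularity of $\widetilde{\mathcal{F}}_\alpha$.

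Uniqueness I would settle by contradiction: if $\gamma_1 \neq \gamma_2$ both produced $p$-adic modular forms, then so would $(\gamma_1 - \gamma_2) E_{g|V(p)}$, but its $q^{p^{2m+1}}$ coefficient $(\gamma_1 - \gamma_2)(p^{2m+1})^{1-k}\,C_g(p^{2m})$ has $p$-adic valuation $(1-k)(m+1) \to -\infty$ (using $v_p(C_g(p^{2m})) = m(k-1)$ from the CM-inert Hecke recursion), contradicting the bounded denominators required of a $p$-adic modular form. The main obstacle will be the $U(p)$-iteration estimate inside the infinite-dimensional space $S_k^{\#,0}(\Gamma_0(N))$: because $\beta$ and $\beta'$ share the valuation $(k-1)/2$ in the CM-inert case, the usual ordinary unit-root splitting is unavailable, and the $\pm$ signs that produce the $2m+1$ parity in the statement will need to be tracked carefully throughout.
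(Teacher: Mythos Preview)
The paper does not give its own proof of this statement: it is quoted verbatim as \cite[Theorem~1.3]{bringmann2012mock} and used as a black box, with only the subsequent well-definedness check (independence of the choice of good lift $f$) carried out in the text. So there is no in-paper argument to compare your proposal against.

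That said, your sketch is the right shape and essentially the strategy of the cited source. The decomposition into the $p$-prime part (handled by Serre's $p$-adic inversion of $D^{k-1}$ on the $p$-depletion of a weight-$k$ weakly holomorphic form) and the $p$-power part (handled by the two-dimensional $U(p)$-module $\langle g,\, g|V(p)\rangle$ with $U(p)^2=\beta^2$ in the CM-inert case) is exactly how one isolates the single correction parameter $\gamma$, and your uniqueness computation of $v_p\big((p^{2m+1})^{1-k}C_g(p^{2m})\big)=(1-k)(m+1)\to-\infty$ is correct. The one place your outline is genuinely incomplete is the step you flag yourself: controlling $D^{k-1}(f)\,|\,U(p)^n/\beta^n$ modulo $\langle g,\,g|V(p)\rangle$. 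In the CM-inert situation there is no unit root, so one cannot simply invoke an ordinary projection; the argument in \cite{bringmann2012mock} instead passes through an auxiliary $p$-adic modular form (built from $D^{k-1}(f)$, its $V(p)$-twist, and $g$, $g|V(p)$) whose $U(p)$-iterates are shown to be $p$-adically bounded after the correct normalization. Filling that in is the substantive work; the rest of your plan would go through.
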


We will show that $\alpha_{g}$ is well-defined.

\begin{lem}[{\cite[Proposition 2.1]{guerzhoy2010p}}]\label{p-adic valuation}
Let $h \in M_{2-k}^{!}( \Gamma _{0}( N) )$ be defined over some algebraic field. Then there is a real number $A$ such that
\begin{align*}
 v_{p}(C_{D^{k-1}(h)}(p^{2m+1})) \geq (2m+1)(k-1)-A  \text{ for all } m \in \mathbb{N}.
\end{align*}
\end{lem}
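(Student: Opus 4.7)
The strategy is to decompose the valuation cleanly and then appeal to a bounded-denominator result. Since $D^{k-1}$ multiplies the $n$-th Fourier coefficient by $n^{k-1}$, specializing at $n=p^{2m+1}$ gives
\begin{align*}
v_p\bigl(C_{D^{k-1}(h)}(p^{2m+1})\bigr) = (2m+1)(k-1) + v_p\bigl(C_h(p^{2m+1})\bigr).
\end{align*}
This identity immediately reduces the lemma to producing a real constant $A$, depending only on $h$, such that $v_p(C_h(n)) \geq -A$ for every integer $n$.

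To establish this bounded-denominator bound I would exploit that $h$ lies in $M_{2-k}^{!}(\Gamma_0(N))$ with Fourier coefficients in an algebraic number field $F$. The plan is to multiply $h$ by a sufficiently high power $G^{j}$ of an auxiliary holomorphic cusp form $G\in S_{k_0}(\Gamma_0(N))$ whose $q$-expansion at $\infty$ is $p$-integral with leading coefficient a $p$-adic unit and which vanishes to high enough order at every other cusp (for instance a suitable eta-quotient, available because $p\nmid N$). For $j$ large enough, $G^{j}h$ becomes a holomorphic modular form in $M_{2-k+jk_0}(\Gamma_0(N))$ still defined over $F$, and such spaces are finite-dimensional and admit bases with $p$-integral Fourier expansions; consequently the $q$-coefficients of $G^{j}h$ have $p$-adic valuations bounded below by some constant. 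Since the $q$-expansion of $G^{j}$ differs from an invertible element of $\mathbb{Z}_p[[q]]$ only by a power of $q$, one can divide in the Laurent series ring $\mathbb{Z}_p((q))$ without losing the bound, producing the required $A'$ with $v_p(C_h(n))\geq -A'$ for all $n$.

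The main obstacle I anticipate is precisely this bounded-denominator statement for weakly holomorphic forms: one needs to arrange both the $p$-integrality of the auxiliary form $G$ and the $\mathcal{O}_{F}[1/N]$-integrality of a basis of the ambient space of holomorphic forms, which relies on standard but nontrivial arithmetic input (the $q$-expansion principle and the existence of integral models for $X_0(N)$ away from $N$). Once that is in hand, taking $A = A'$ and combining the bound $v_p(C_h(n)) \geq -A$ with the displayed identity finishes the proof in one line.
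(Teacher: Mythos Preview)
The paper does not supply its own proof of this lemma; it is stated with a citation to Guerzhoy and used as a black box. So there is nothing in the paper to compare your argument against.

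That said, your argument is correct and is essentially the standard route. The reduction via
\[
v_p\bigl(C_{D^{k-1}(h)}(p^{2m+1})\bigr)=(2m+1)(k-1)+v_p\bigl(C_h(p^{2m+1})\bigr)
\]
is exactly right, and the remaining bounded-denominator statement for $h\in M_{2-k}^{!}(\Gamma_0(N))$ over a number field is the crux. One simplification: instead of searching for an eta-quotient of level $N$ with the properties you list, you can use a power of $\Delta(z)\in S_{12}(\SL_2(\mathbb{Z}))\subset S_{12}(\Gamma_0(N))$. It has integer Fourier coefficients with leading term $q$, vanishes at \emph{every} cusp of $\Gamma_0(N)$, and its inverse in $\mathbb{Z}[[q]][q^{-1}]$ is again integral. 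Thus $\Delta^{j}h$ is holomorphic for $j$ large, lies in a finite-dimensional space with an integral basis (away from $N$), hence has bounded denominators, and dividing by $\Delta^{j}$ in $\mathbb{Z}_p((q))$ preserves the bound. This bypasses the ``high enough order at the other cusps'' bookkeeping entirely.
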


If $f$ and $f'$ are good for $g$, then $\xi_{2-k}(f-f')=0$. Therefore $f-f'$ is an element of $M_{2-k}^{!}( \Gamma _{0}(N))$ and defined over $F_{g}(\zeta_{M})$ by Theorem \ref{cof of mock}. By Lemma \ref{p-adic valuation}, we have
\begin{align*}
\displaystyle\lim _{m\rightarrow \infty }\dfrac{\left(C_{D^{k-1}(f)}(p^{2m+1})-C_{D^{k-1}(f')}(p^{2m+1})\right) }{\beta ^{2m}}=0.
\end{align*}
Therefore $\alpha_{g}$ is well-defined.

\section{Proof of the Main Theorem}
In this section, we prove the main theorem.
Firstly, we will show that $\alpha_{g}\not=0$.
\begin{lem}\label{kihonsiki}
Suppose that $S_{k}( \Gamma _{0}( N) )$ is one-dimensional and that the unique normalized
cusp form $g$ has complex multiplication by $K$. Then there exist $c, d \in \mathbb{C}$ and $h \in M_{2-k}^{\# }( \Gamma _{0}( N) )$ such that 
\begin{equation}\label{kihon eq}
D^{k-1}(f)=cF+D^{k-1}(h)+dg.
\end{equation}
\end{lem}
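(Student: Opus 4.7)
The plan is to reduce the lemma to a one-dimensional linear-algebra statement via Lemma \ref{isom}, and then verify that $F$ represents a generator of the resulting one-dimensional quotient.

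First I would note that by Lemma \ref{isom}, $\dim_{\mathbb{C}} \widehat{S_{k}}^{\#,0}(\Gamma_{0}(N)) = \dim_{\mathbb{C}} S_{k}(\Gamma_{0}(N)) = 1$. The preceding lemma places $D^{k-1}(f)$ in $S_{k}^{\#,0}(\Gamma_{0}(N))$, and Lemma \ref{hanson} places $F$ there as well. Once the class $[F]$ in $\widehat{S_{k}}^{\#,0}(\Gamma_{0}(N))$ is shown to be nonzero, it must span the quotient; then $[D^{k-1}(f)] = c[F]$ for some $c \in \mathbb{C}$, and unpacking the direct-sum denominator in the definition of $\widehat{S_{k}}^{\#,0}(\Gamma_{0}(N))$ yields the desired identity with some $h \in M_{2-k}^{\#}(\Gamma_{0}(N))$ and $d \in \mathbb{C}$.

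The main obstacle is therefore to rule out $F \in D^{k-1}(M_{2-k}^{\#}(\Gamma_{0}(N))) + S_{k}(\Gamma_{0}(N))$. I would argue by contradiction and write $F = D^{k-1}(h') + d'g$. Since $\dim_{\mathbb{C}} S_{k}(\Gamma_{0}(N)) = 1$ forces the Hecke field $F_{g}$ to equal $\mathbb{Q}$ (the Galois orbit of $g$ would otherwise produce independent newforms in $S_{k}$), the newform $g$ has rational Fourier coefficients, and Lemma \ref{hanson} already gives $F \in \mathbb{Z}[[q]][q^{-1}]$. Combining the direct sum $D^{k-1}(M_{2-k}^{\#}(\Gamma_{0}(N))) \oplus S_{k}(\Gamma_{0}(N))$ with a Galois argument applying $\sigma \in \mathrm{Aut}(\mathbb{C}/\mathbb{Q})$ to the identity $F = D^{k-1}(h') + d'g$ and using uniqueness of the decomposition (together with an adjustment by a constant in the special case $k = 2$, since constants lie in the kernel of $D$), one may assume $d' \in \mathbb{Q}$ and $h' \in M_{2-k}^{\#}(\Gamma_{0}(N))$ is defined over $\mathbb{Q}$.

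To derive the contradiction, I would combine the CM structure with the two opposing $p$-adic estimates in Lemmas \ref{hanson} and \ref{p-adic valuation}. Because $p$ is inert in $\mathcal{O}_{K}$ and $g$ is CM, the Hecke recursion yields $C_{g}(p^{2m+1}) = 0$ for every $m \geq 0$, so writing $h' = \sum a'_{n} q^{n}$ gives $C_{F}(p^{2m+1}) = p^{(2m+1)(k-1)} a'_{p^{2m+1}}$. Lemma \ref{hanson} forces $v_{p}(C_{F}(p^{2m+1})) = (k-1)m$, while Lemma \ref{p-adic valuation} applied to $h'$ forces $v_{p}(C_{D^{k-1}(h')}(p^{2m+1})) \geq (2m+1)(k-1) - A$ for some constant $A$ independent of $m$. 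Combining the two inequalities gives $A \geq (m+1)(k-1)$ for every $m \geq 0$, which is impossible. Hence $[F] \neq 0$ in $\widehat{S_{k}}^{\#,0}(\Gamma_{0}(N))$, and the lemma follows.
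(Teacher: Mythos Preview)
Your reduction via Lemma \ref{isom} is exactly the paper's argument: the paper's entire proof of this lemma is the two-line observation that $\dim_{\mathbb{C}}\widehat{S_k}^{\#,0}(\Gamma_0(N))=1$ and that both $D^{k-1}(f)$ and $F$ lie in $S_k^{\#,0}(\Gamma_0(N))$.  The paper does \emph{not} verify $[F]\neq 0$ here; it postpones that to the opening line of Lemma \ref{c in F_g}, where it is handled ``in a manner similar to the proof of Lemma \ref{c not 0}'', i.e.\ via the principal-part argument and Theorem \ref{thm: pri nonzero} (if $[F]=0$ one is forced into the $c=0$ situation, and then $f-h$ has constant principal parts at every cusp but nonzero $\xi$-image).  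Your route to $[F]\neq 0$ is genuinely different: you descend $h'$ and $d'$ to $\mathbb{Q}$ by Galois and then play Lemma \ref{hanson} against Lemma \ref{p-adic valuation} at an auxiliary inert prime, which is a clean $p$-adic contradiction that avoids the harmonic-Maass-form input of Theorem \ref{thm: pri nonzero} altogether.  Both arguments are valid; yours stays entirely within the $p$-adic toolkit already needed for Theorem \ref{main not 0}, while the paper's is shorter once the $\xi_{2-k}$ machinery is in hand.  In effect you have folded part of the content of Lemmas \ref{c not 0}--\ref{c in F_g} into this proof, so what you wrote is strictly more than Lemma \ref{kihonsiki} itself requires.
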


\begin{proof}
From Lemma \ref{list of 0} and Lemma \ref{isom}, we obtain that $\dim$ $\widehat{S_{k}}^{\# ,0} ( \Gamma _{0}( N) )=1$.
\\
Therefore we can show this lemma by  $D^{k-1}(f), F \in \widehat{S_{k}}^{\# ,0} ( \Gamma _{0}( N) )$.
\end{proof}

\begin{lem}\label{c not 0}
The constant $c$ in Lemma \ref{kihonsiki} is not zero.
\end{lem}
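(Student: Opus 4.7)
The plan is to argue by contradiction. Suppose $c=0$ in the identity of Lemma \ref{kihonsiki}, so that $D^{k-1}(f)=D^{k-1}(h)+dg$; since $D^{k-1}(f^{-})=0$ by Theorem \ref{xi}, this rearranges to
\begin{align*}
D^{k-1}(f^{+}-h)=dg \quad \text{in } M_{k}^{!}(\Gamma_{0}(N)).
\end{align*}
The strategy is to exhibit a negative-index Fourier coefficient of the left-hand side at $\infty$ that is nonzero, which will contradict the cuspidality of $g$ on the right.

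First I would pin down the principal parts of the harmonic Maass form $f-h$ at every cusp of $\Gamma_{0}(N)$. Because $f$ is good for $g$ (Definition \ref{good}), its principal part is constant at every cusp except possibly $\infty$; because $h\in M_{2-k}^{\#}(\Gamma_{0}(N))$ is holomorphic at every cusp other than $\infty$, the same is true for $h$ there. Subtracting, the principal part of $f-h$ is constant at every cusp other than $\infty$. On the other hand, since $h$ is weakly holomorphic we have $\xi_{2-k}(h)=0$, so $\xi_{2-k}(f-h)=\xi_{2-k}(f)=g/\|g\|^{2}\neq 0$. Theorem \ref{thm: pri nonzero} then forces the principal part of $f-h$ to be nonconstant at at least one cusp, and by the previous observation that cusp must be $\infty$. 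In particular there exists some $n_{0}<0$ with $C_{f}^{+}(n_{0})-a_{n_{0}}\neq 0$, where $h=\sum_{n}a_{n}q^{n}$ is the Fourier expansion of $h$ at $\infty$.

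Comparing the coefficients of $q^{n_{0}}$ on both sides of $D^{k-1}(f^{+}-h)=dg$ yields $n_{0}^{k-1}(C_{f}^{+}(n_{0})-a_{n_{0}})\neq 0$ on the left and $0$ on the right (since $g$ vanishes at $\infty$), a contradiction. Hence $c\neq 0$. I do not anticipate a serious obstacle: the crucial input is Theorem \ref{thm: pri nonzero} applied to the harmonic Maass form $f-h$, and Definition \ref{good} together with the definition of $M_{2-k}^{\#}(\Gamma_{0}(N))$ makes the bookkeeping of principal parts at cusps other than $\infty$ straightforward.
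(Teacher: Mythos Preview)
Your proof is correct and uses the same ingredients as the paper's argument: the behaviour of the principal parts of $f-h$ at cusps other than $\infty$ (from Definition~\ref{good} and $h\in M_{2-k}^{\#}(\Gamma_0(N))$), the nonvanishing $\xi_{2-k}(f-h)=g/\|g\|^2$, and Theorem~\ref{thm: pri nonzero}. The only difference is the order of the steps: the paper first reads off from $D^{k-1}(f)=D^{k-1}(h)+dg$ that $C_f^{+}(n)=C_h(n)$ for all $n\le -1$, concludes that the principal part of $f-h$ is constant at \emph{every} cusp, and then invokes Theorem~\ref{thm: pri nonzero} for the contradiction; you instead first apply Theorem~\ref{thm: pri nonzero} to force a nonconstant principal part at $\infty$ and then contradict the equation at a single negative index. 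The two arguments are logically equivalent reorderings of one another.
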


\begin{proof}
Assume that $c=0$.
We write
$f^{+}=\sum _{n\gg -\infty}C_{f}^{+}( n) q^{n}$ and 
$h=\sum _{n\gg -\infty}C_{h}( n) q^{n}$.
\\
Since $c=0$, we have
\begin{align*}
D^{k-1}(f)=D^{k-1}(h)+dg.
\end{align*}
Therefore if $n\leq -1$, then
\begin{align*}
C_{f}^{+}(n)=C_{h}(n).
\end{align*}
We put $H =f-h \in H_{2-k} ( \Gamma _{0}( N) )$. Then the principal part of $H$ at $\infty$ is constant.

Let $s \in \mathbb{Q}$ be a cusp of $\Gamma_{0}(N)$.
Since $f$ is good for $g$, the principal part of $f$ at $s$ is constant.
Therefore the principal part of $H$ at $s$ is constant. 
\\
Consequently, the principal part of $H$ is constant at all cusps and
\begin{align*}
\xi_{2-k}(H)=\xi_{2-k}(f)=\dfrac{g}{\left\| g\right\| ^{2}}\not=0.
\end{align*}

This contradicts Theorem \ref{thm: pri nonzero}.
\end{proof}

\begin{lem}\label{c in F_g}
There exists a positive integer $M$ such that the constant $c, d$ in Lemma \ref{kihonsiki} are in  $F_{g}(\zeta_{M})$.
\end{lem}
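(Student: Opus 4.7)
The strategy is to combine the uniqueness of the decomposition in Lemma \ref{kihonsiki} with a Galois descent argument. Let $M$ be the integer provided by Theorem \ref{cof of mock}, so that every Fourier coefficient of $f^{+}$ lies in $F_{g}(\zeta_{M})$; since $D^{k-1}$ acts on $q$-expansions by $\sum C(n) q^{n}\mapsto \sum n^{k-1}C(n)q^{n}$ with $n^{k-1}\in\mathbb{Z}$, the form $D^{k-1}(f)=D^{k-1}(f^{+})$ also has $q$-expansion in $F_{g}(\zeta_{M})$. By Lemma \ref{hanson} the form $F$ has integer Fourier coefficients, and the hypothesis $\dim_{\mathbb{C}}S_{k}(\Gamma_{0}(N))=1$ forces $F_{g}=\mathbb{Q}$, so $g$ has rational Fourier coefficients as well.

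First I would show that the scalars $c,d$ in \ref{kihon eq} are uniquely determined. To do so, I would verify that $D^{k-1}(M_{2-k}^{\#}(\Gamma_{0}(N)))\cap S_{k}(\Gamma_{0}(N))=\{0\}$: if $D^{k-1}(h_{0})\in S_{k}$ for some $h_{0}\in M_{2-k}^{\#}$, then $D^{k-1}(h_{0})$ is holomorphic at every cusp, forcing $h_{0}$ to have no principal part at $\infty$, hence $h_{0}\in M_{2-k}(\Gamma_{0}(N))$, which is either zero (for $k>2$) or consists of constants (for $k=2$), in either case giving $D^{k-1}(h_{0})=0$. Combined with Lemma \ref{c not 0}, which in particular forces $[F]\neq 0$ in $\widehat{S_{k}}^{\#,0}(\Gamma_{0}(N))$, and with $\dim\widehat{S_{k}}^{\#,0}(\Gamma_{0}(N))=1$, this shows $c$ is uniquely determined; once $c$ is fixed, the direct sum property then pins down $d$ as well.

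Next I would fix a $\mathbb{Q}$-rational basis $h_{1},\ldots,h_{r}$ of $M_{2-k}^{\#}(\Gamma_{0}(N))$, which exists from the standard $\mathbb{Q}$-structure on weakly holomorphic modular forms (and can in fact be written down explicitly from $\eta$-quotients and a Hauptmodul on $X_{0}(N)$ in each of the five cases of Lemma \ref{list of 0}). Writing $h=\sum b_{i}h_{i}$, equation \ref{kihon eq} becomes a linear system in the unknowns $c,b_{1},\ldots,b_{r},d\in\mathbb{C}$ whose coefficient matrix is $\mathbb{Q}$-rational (its rows are $q$-coefficients of $F, D^{k-1}(h_{1}),\ldots,D^{k-1}(h_{r}), g$) and whose right-hand side lies in $F_{g}(\zeta_{M})\subseteq\overline{\mathbb{Q}}$. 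By the linear independence established in the previous paragraph the system has a unique solution, which by Cramer's rule lies in $\overline{\mathbb{Q}}$; in particular $c,d\in\overline{\mathbb{Q}}$.

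Finally, for any $\sigma\in\mathrm{Gal}(\overline{\mathbb{Q}}/F_{g}(\zeta_{M}))$, the $q$-expansions of $D^{k-1}(f), F, g, h_{1},\ldots,h_{r}$ are all $\sigma$-invariant; applying $\sigma$ to \ref{kihon eq} produces a second valid decomposition with $(c,b_{i},d)$ replaced by $(\sigma(c),\sigma(b_{i}),\sigma(d))$, so the uniqueness from step one forces $\sigma(c)=c$ and $\sigma(d)=d$, and Galois theory yields $c,d\in F_{g}(\zeta_{M})$. The main obstacle is the verification of the direct sum decomposition $\mathbb{C}F\oplus D^{k-1}(M_{2-k}^{\#}(\Gamma_{0}(N)))\oplus S_{k}(\Gamma_{0}(N))\subseteq S_{k}^{\#,0}(\Gamma_{0}(N))$ and the choice of a $\mathbb{Q}$-rational basis of $M_{2-k}^{\#}(\Gamma_{0}(N))$; both are tractable given the finite list of $(k,N)$ provided by Lemma \ref{list of 0}.
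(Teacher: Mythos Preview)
Your overall strategy---establish uniqueness of $c$ and $d$ in the decomposition, then use a Galois descent argument---is the same as the paper's, and your uniqueness argument in the first step is correct. The gap is in your second step: the space $M_{2-k}^{\#}(\Gamma_{0}(N))$ is \emph{infinite}-dimensional (the pole order at $\infty$ is unbounded), so you cannot choose a finite basis $h_{1},\ldots,h_{r}$ and the Cramer's rule argument does not go through as written. This is easily repaired: since $D^{k-1}(f)$ has bounded principal part at $\infty$ and $F=-q^{-1}+O(q)$, equation \eqref{kihon eq} forces the principal part of $h$ at $\infty$ to have order bounded by that of $f^{+}$, so you may work inside the finite-dimensional subspace of $M_{2-k}^{\#}$ with pole order at $\infty$ bounded by a fixed constant, and that subspace does admit a $\mathbb{Q}$-rational basis.

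The paper sidesteps this issue entirely. Rather than first proving $c,d\in\overline{\mathbb{Q}}$ via a linear system and then descending with $\mathrm{Gal}(\overline{\mathbb{Q}}/F_{g}(\zeta_{M}))$, it applies an arbitrary $\sigma\in\mathrm{Aut}(\mathbb{C}/F_{g}(\zeta_{M}))$ directly to \eqref{kihon eq}. The key input (quoted from \cite{bruinier2008differential}) is that $h^{\sigma}$ is again an element of $M_{2-k}^{\#}(\Gamma_{0}(N))$, so the conjugated equation is another decomposition of the same type; passing to the one-dimensional quotient $\widehat{S_{k}}^{\#,0}$ immediately gives $c^{\sigma}=c$. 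For $d$, the paper reads off the principal part of $h$ from \eqref{kihon eq} (using $F=-q^{-1}+O(q^{2})$) to see that the negative Fourier coefficients of $h$ already lie in $F_{g}(\zeta_{M})$, whence $h^{\sigma}-h\in M_{2-k}(\Gamma_{0}(N))$ and $D^{k-1}(h^{\sigma}-h)=0$ by the same case analysis you gave. This avoids any choice of basis and any preliminary algebraicity step.
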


\begin{proof}
In a manner similar to the proof of Lemma \ref{c not 0}, we can show that $F\not=0$ as an element of $\widehat{S_{k}}^{\# ,0} ( \Gamma _{0}( N) )$.
By Theorem \ref{cof of mock}, we have 
\begin{align*}
D^{k-1}(f)=c^{\sigma}F+D^{k-1}(h^{\sigma})+d^{\sigma}g 
\end{align*}
for all $\sigma \in \text{Aut}(\mathbb{C} / F_{g}(\zeta_{M}))$.
From the discussion of \cite[Theorem1.3]{bruinier2008differential}, $h^{\sigma} \in M_{2-k}^{\# }( \Gamma _{0}( N) )$ holds. Therefore as an element of $\widehat{S_{k}}^{\# ,0} ( \Gamma _{0}( N) )$, we have
\begin{align*}
D^{k-1}(f)=cF=c^{\sigma}F.
\end{align*}
Since $F\not=0$, we obtain $c \in F_{g}(\zeta_{M})$.
Therefore, we have
\begin{align*}
D^{k-1}(f)=cF+D^{k-1}(h^{\sigma})+d^{\sigma}g 
\end{align*}
for all $\sigma \in \text{Aut}(\mathbb{C} / F_{g}(\zeta_{M}))$.
Therefore
\begin{align*}
(d-d^{\sigma})g=D^{k-1}(h^{\sigma}-h)
\end{align*}
holds.
Since $F$ is written by $F=-q^{-1}+\sum ^{\infty }_{n=2}C( n) q^{n}$ and \eqref{kihon eq}, 
\begin{align*}
C_{h}(-1)=C_{f}^{+}(-1)+1
\end{align*}
and
\begin{align*}
C_{h}(n)=C_{f}^{+}(n)
\end{align*}
for all $n \leq -2$.
Consequently, all coefficients of $h$ at negative integers are in $F_{g}(\zeta_{M})$.
Therefore, we have
\begin{align*}
h^{\sigma}-h \in M_{2-k}(\Gamma_0({N})).
\end{align*}
By  Lemma \ref{list of 0} and the assumption of Lemma \ref{kihonsiki}, we have $k=2, 4$.
\\
(1)In case that $k=2$, $D^{k-1}(h^{\sigma}-h)=0$ by $M_{0}(\Gamma_0({N}))=\mathbb{C}$.
\\
(2)In case that $k=4$, $D^{k-1}(h^{\sigma}-h)=0$ by  $M_{-2}(\Gamma_0({N}))=0$.

Consequently, we obtain $d=d^{\sigma}$.
\end{proof}

\begin{lem}
All coefficients of $h$ in Lemma \ref{c in F_g} are element of $ F_{g}(\zeta_{M})$.
\end{lem}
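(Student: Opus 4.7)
The plan is to solve for $D^{k-1}(h)$ in equation \eqref{kihon eq} of Lemma \ref{kihonsiki}, read off the Fourier coefficients of $h$ at every nonzero index from those of $D^{k-1}(h)$, and then treat the constant Fourier coefficient of $h$ at $\infty$ separately, since it is the only piece of $h$ not detected by $D^{k-1}$.

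First I would rewrite \eqref{kihon eq} as $D^{k-1}(h)=D^{k-1}(f^{+})-cF-dg$. By Theorem \ref{cof of mock} the coefficients of $f^{+}$ (and hence of $D^{k-1}(f^{+})$) lie in $F_{g}(\zeta_{M})$; by Lemma \ref{hanson} the form $F$ has integer coefficients; by Lemma \ref{c in F_g} the scalars $c$ and $d$ are in $F_{g}(\zeta_{M})$; and the normalized newform $g$ has coefficients in $F_{g}$. Combining these, every Fourier coefficient of $D^{k-1}(h)$ lies in $F_{g}(\zeta_{M})$. Since $D^{k-1}$ acts on Fourier expansions by multiplying the $n$-th coefficient by $n^{k-1}$, it follows that the $n$-th coefficient of $h$ lies in $F_{g}(\zeta_{M})$ for every integer $n\neq 0$.

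The main obstacle is the constant term of $h$ at $\infty$, which is in the kernel of $D^{k-1}$. By Lemma \ref{list of 0}, the weight satisfies $k\in\{2,4\}$, and the two cases are handled differently. If $k=4$, then for each $\sigma\in\mathrm{Aut}(\mathbb{C}/F_{g}(\zeta_{M}))$ the Galois conjugate $h^{\sigma}$ is again in $M_{-2}^{\#}(\Gamma_{0}(N))$, and by the preceding paragraph $h^{\sigma}-h$ has vanishing Fourier coefficient at every $n\neq 0$. Thus $h^{\sigma}-h$ is a constant function lying in $M_{-2}^{\#}(\Gamma_{0}(N))$; but a nonzero constant cannot be modular of weight $-2$, so $h^{\sigma}=h$. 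Consequently the constant term of $h$ is Galois-fixed and lies in $F_{g}(\zeta_{M})$.

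If $k=2$, the constant term of $h$ is not pinned down by \eqref{kihon eq}: for any $\lambda\in\mathbb{C}$, the form $h+\lambda$ still lies in $M_{0}^{\#}(\Gamma_{0}(N))$ and satisfies the same equation, because $D$ annihilates constants. Hence I am free to replace $h$ by $h$ minus its constant term at $\infty$; this remains an admissible choice in Lemma \ref{kihonsiki} and now all Fourier coefficients of $h$---the constant term being zero---lie in $F_{g}(\zeta_{M})$.
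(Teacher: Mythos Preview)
Your proof is correct and slightly more direct than the paper's. The paper recycles an intermediate step established inside the proof of Lemma~\ref{c in F_g}, namely that the principal part of $h$ already has coefficients in $F_{g}(\zeta_{M})$, and concludes from this that $h^{\sigma}-h\in M_{2-k}(\Gamma_{0}(N))$ for every $\sigma\in\mathrm{Aut}(\mathbb{C}/F_{g}(\zeta_{M}))$; it then runs the same case split on $k\in\{2,4\}$ that you do. You instead solve \eqref{kihon eq} for $D^{k-1}(h)$ and read off the $F_{g}(\zeta_{M})$-rationality of \emph{all} nonzero-index coefficients of $h$ in one stroke, so that $h^{\sigma}-h$ is already a constant $q$-series before the case analysis begins. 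Both arguments normalize away the constant term when $k=2$, and both use that the only constant modular of weight $-2$ is zero; your route simply gets there without borrowing from the previous lemma's proof, at the cost of asserting that $h^{\sigma}$ is again weakly holomorphic modular (which the paper justifies by citing \cite{bruinier2008differential}).
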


\begin{proof}
By the proof of Theorem \ref{c in F_g}, we have
\begin{align*}
h^{\sigma}-h \in M_{2-k}(\Gamma_0({N})).
\end{align*}
for all $\sigma \in \text{Aut}(\mathbb{C} / F_{g}(\zeta_{M}))$.
If $k=4$, then
\begin{align*}
h^{\sigma}=h
\end{align*}
holds.
Therefore we assume that $k=2$.
Let $C_{h}(0)$ be the $0$-th coefficient of $h$.
Then we have
\begin{align*}
h-C_{h}(0) \in M_{0}^{!}(\Gamma_0({N}))
\end{align*}
and $h-C_{h}(0)$ satisfies \ref{kihon eq}.
Therefore we can assume that the constant term of $h$ is zero.
Thanks to this assumption, 
\begin{align*}
h^{\sigma}=h
\end{align*}
holds.
\end{proof}

\begin{thm}\label{main not 0}
Suppose that $k$ is an even integer and $N$ is a natural number. Let $g \in S_{k}(\Gamma_{0}(N))$ be a normalized newform with complex multiplication by an imaginary quadratic field $K$ and $p$ an odd prime number.
Assume that $p$ is inert in $\mathcal{O}_{K}$ and $p \nmid N$.
If $\dim S_{k}(\Gamma_{0}(N))=1$, then $\alpha_{g}\not=0$.

\end{thm}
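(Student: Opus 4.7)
The plan is to extract the $p^{2m+1}$-th coefficient from the identity in Lemma \ref{kihonsiki}, divide by $\beta^{2m}$, and pass to the $p$-adic limit using the formula $\alpha_{g}=\lim_{m\to\infty}C_{D^{k-1}(f)}(p^{2m+1})/\beta^{2m}$. The three resulting contributions can each be controlled: the $F$-term has constant $p$-adic valuation (thanks to Lemma \ref{hanson}), the $D^{k-1}(h)$-term has valuation tending to $+\infty$ (thanks to Lemma \ref{p-adic valuation}, which applies because Lemma \ref{c in F_g} and the subsequent lemma ensure $h$ is defined over the algebraic field $F_{g}(\zeta_{M})$), and the $g$-term vanishes identically for odd powers of $p$ since $g$ is CM and $p$ is inert.

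In more detail: first I compute the valuation of $\beta$. Because $g$ has CM by $K$ and $p$ is inert in $\mathcal{O}_{K}$, the $p$-th Hecke eigenvalue satisfies $C_{g}(p)=0$, so $\beta$ is a root of $X^{2}+p^{k-1}$, giving $v_{p}(\beta)=(k-1)/2$ and $v_{p}(\beta^{2m})=m(k-1)$. The same recursion $C_{g}(p^{n+1})=C_{g}(p)C_{g}(p^{n})-p^{k-1}C_{g}(p^{n-1})$ with initial condition $C_{g}(p)=0$ shows $C_{g}(p^{2m+1})=0$ for every $m\geq 0$, so the $dg$-term contributes nothing to the coefficient extraction.

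Next, by Lemma \ref{hanson},
\begin{equation*}
v_{p}\!\left(\frac{c\,C_{F}(p^{2m+1})}{\beta^{2m}}\right)=v_{p}(c)+m(k-1)-m(k-1)=v_{p}(c),
\end{equation*}
which is a finite constant since $c\neq 0$ by Lemma \ref{c not 0}. On the other hand, because $h\in M_{2-k}^{\#}(\Gamma_{0}(N))$ is defined over $F_{g}(\zeta_{M})$, Lemma \ref{p-adic valuation} supplies a constant $A$ such that $v_{p}(C_{D^{k-1}(h)}(p^{2m+1}))\geq (2m+1)(k-1)-A$, hence
\begin{equation*}
v_{p}\!\left(\frac{C_{D^{k-1}(h)}(p^{2m+1})}{\beta^{2m}}\right)\geq (m+1)(k-1)-A \longrightarrow \infty.
\end{equation*}

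Finally, combining the three estimates in the $p^{2m+1}$-th coefficient of \eqref{kihon eq} and dividing by $\beta^{2m}$,
\begin{equation*}
\frac{C_{D^{k-1}(f)}(p^{2m+1})}{\beta^{2m}}=\frac{c\,C_{F}(p^{2m+1})}{\beta^{2m}}+\frac{C_{D^{k-1}(h)}(p^{2m+1})}{\beta^{2m}}.
\end{equation*}
The second summand tends to $0$ $p$-adically while the first has $p$-adic valuation equal to the constant $v_{p}(c)$ for every $m$. By continuity of $|\cdot|_{p}$ on $\mathbb{C}_{p}$, the limit $\alpha_{g}$ also has $p$-adic valuation $v_{p}(c)<\infty$, and in particular $\alpha_{g}\neq 0$. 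The main obstacle is really the bookkeeping of the valuations in step three; everything else is assembled from the lemmas already in place. (Note that this argument in fact computes $v_{p}(\alpha_{g})=v_{p}(c)$, which is the key identity needed to upgrade from $\alpha_{g}\neq 0$ to the unit statement of Theorem \ref{main}.)
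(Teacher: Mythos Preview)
Your proof is correct and follows essentially the same route as the paper: extract the $p^{2m+1}$-th coefficient from the decomposition of Lemma \ref{kihonsiki}, kill the $g$-term via $C_{g}(p^{2m+1})=0$, kill the $h$-term in the limit via Lemma \ref{p-adic valuation}, and use Lemma \ref{hanson} to see that the surviving $F$-term has constant valuation $v_{p}(c)$. The paper phrases the final step as ``$\{x:v_{p}(x)=0\}$ is closed'' applied to $C_{F}(p^{2m+1})/\beta^{2m}$ and then multiplies by $c$, whereas you work directly with the valuation of the full sum, but the content is identical.
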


\begin{proof}
Since $p$ is inert in $\mathcal{O}_{K}$, we have
\begin{align*}
C_{g}(p^{2m+1})=0 
\end{align*}
$\text{ for all } m \in \mathbb{N}.$
Hence we obtain
\begin{align*}
C_{D^{k-1}(f)}^{+}(p^{2m+1})=cC_{F}(p^{2m+1})+C_{D^{k-1}(h)}(p^{2m+1}).
\end{align*}

Therefore
\begin{align*}
\dfrac{C_{D^{k-1}(f)}(p^{2m+1})}{\beta^{2m}}=c \dfrac{C_{F}(p^{2m+1})}{\beta^{2m}}+\dfrac{c_{D^{k-1}(h)}(p^{2m+1})}{\beta^{2m}}
\end{align*} 
holds.
Since $\beta$ is the root of the polynomial $X^{2}-c_{g}(p)X+p^{k-1}=X^{2}+p^{k-1}$ and Lemma \ref{p-adic valuation}, we have
\begin{align*}
\alpha_{g} = \displaystyle\lim _{m\rightarrow \infty }\dfrac{C_{D^{k-1}(f)}( p^{2m+1}) }{\beta ^{2m}} = c \displaystyle\lim _{m\rightarrow \infty }\dfrac{C_{F}( p^{2m+1}) }{\beta ^{2m}}.
\end{align*}
From Lemma \ref{hanson} and the fact that $ \left\{ x \in \mathbb{C}_{p} \mid v_{p}(x)=0 \right\}$ is closed,
 \begin{align*}
\displaystyle\lim _{m\rightarrow \infty }\dfrac{C_{F}( p^{2m+1}) }{\beta ^{2m}} \not=0
\end{align*}
holds.
By Lemma \ref{c not 0}, we obtain
\begin{align*}
\alpha_{g} \not= 0.
\end{align*}
\end{proof}

Lastly, we will show that $v_{p}(\alpha_{g})=0$.

\begin{lem}[{\cite[Proposition 5.11.]{bringmann2017harmonic}}]\label{Brunier}
Let $f \in H_{2-k}( \Gamma _{0}( N) )$ and $g \in S_{k}( \Gamma _{0}( N) )$.
We denote the principal part of $f$ at cusp $s$ by 
\begin{align*}
\sum_{n<0}C_{f, s}^{+}(n)q_{h_{s}}^{n}
\end{align*} 
and the Fourier expansion of $g$ at cusp $s$ by
\begin{align*}
\sum_{n>0}C_{g, s}(n)q_{h_{s}}^{n}
\end{align*} 
where $h_{s}$ is the width of $s$.
 Then we have
 \begin{align*}
 (\xi_{2-k}(f), g)=\sum_{s : \text{cusp}}\sum_{n<0} C_{f, s}^{+}(n)C_{g, s}(n)
 \end{align*}
 where $(\cdot , \cdot)$ is the Petersson inner product.
\end{lem}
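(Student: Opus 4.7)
The plan is to compute $(\xi_{2-k}(f), g)$ by rewriting its integrand as an exact differential form on $\mathbb{H}$ and applying Stokes' theorem to a truncated fundamental domain of $\Gamma_0(N)\backslash \mathbb H$, so that the inner product collapses to a sum of boundary contributions at the cusps.

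The key observation is the identity
\[
\xi_{2-k}(f)(z)\,\overline{g(z)}\,y^{k-2}\,dx\wedge dy \;=\; d\omega, \qquad \omega \;:=\; -\bar f(z)\,\overline{g(z)}\,d\bar z.
\]
This is checked by a short direct calculation: since $g$ is holomorphic, $\partial_z\overline{g}=0$, so $d\omega = -\partial_z \bar f \cdot \overline{g}\,dz\wedge d\bar z$, and combining $\partial_z\bar f = \overline{\partial_{\bar z}f}$, $\xi_{2-k}(f) = 2iy^{2-k}\overline{\partial_{\bar z}f}$, and $dz\wedge d\bar z = -2i\,dx\wedge dy$ gives the asserted equality. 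The 1-form $\omega$ is $\Gamma_0(N)$-invariant: the automorphy factors coming from $f$ of weight $2-k$ and $g$ of weight $k$ combine with the Jacobian of $d\bar z$ to give $(c\bar z + d)^{(2-k)+k-2}=1$.

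Next, I would truncate a fundamental domain $\mathcal F$ to $\mathcal F_T$ by removing, for each cusp $s$ of width $h_s$, the image under a scaling matrix $\sigma_s$ of the horoball $\{x+iy : 0\le x<h_s,\ y>T\}$. By $\Gamma_0(N)$-invariance of $\omega$, the interior boundary pieces of $\mathcal F_T$ cancel in $\Gamma_0(N)$-equivalent pairs, and by $h_s$-periodicity in the $\sigma_s$-chart the vertical edges of each excised horoball cancel as well. Stokes' theorem therefore yields
\[
(\xi_{2-k}(f), g) \;=\; \lim_{T\to\infty}\int_{\mathcal F_T}d\omega \;=\; \sum_{s:\,\mathrm{cusp}}\lim_{T\to\infty}\int_{L_{s,T}}\omega,
\]
where $L_{s,T}$ is the horizontal segment at height $T$ in the $\sigma_s$-chart, oriented as part of $\partial\mathcal F_T$.

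Finally, I would evaluate each boundary integral cusp by cusp. After pulling back to $\sigma_s$-coordinates and substituting the Fourier expansions of $f=f^++f^-$ and $g$ at $s$, the orthogonality $\int_0^{h_s}e^{2\pi i (n-m)x/h_s}\,dx = h_s\,\delta_{nm}$ collapses the line integral to a diagonal sum. Since $g$ is cuspidal, only indices pairing a negative Fourier index of $f^+$ with a positive Fourier index of $g$ survive, producing the stated sum $\sum_{n<0}C_{f,s}^+(n) C_{g,s}(n)$ (up to the conjugation and orientation conventions of the Petersson pairing). The contribution from $f^-$ is weighted by $\Gamma(k-1, 4\pi|n|T)$, which decays like $(4\pi|n|T)^{k-2}e^{-4\pi|n|T}$ by the standard asymptotic and hence vanishes as $T\to\infty$. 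The main obstacle is bookkeeping: pinning down the precise 1-form $\omega$, the induced orientation on $L_{s,T}$, and the conjugations so that the boundary terms assemble to the stated right-hand side rather than its complex conjugate or a sign flip.
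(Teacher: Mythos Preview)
The paper does not supply its own proof of this lemma; it simply imports the result from \cite[Proposition~5.11]{bringmann2017harmonic}. Your Stokes' theorem argument is exactly the standard proof one finds in that reference (the Bruinier--Funke pairing computation), so there is nothing to compare: your sketch is correct and matches the cited source. The only loose end you flag yourself---tracking the orientation of $L_{s,T}$ and the index pairing so that the boundary term reads $C_{f,s}^{+}(n)C_{g,s}(-n)$ rather than its conjugate---is real but routine; note in passing that the paper's displayed formula writes $C_{g,s}(n)$ with $n<0$, which should be read as $C_{g,s}(-n)$.
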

We generalize the pairing that is defined by P. Guerzhoy. (cf. \cite{guerzhoy2008hecke})
\begin{lem}\label{pairing}
We define the pairing
\begin{align*}
\langle \cdot , \cdot \rangle : \widehat{S_{k}}^{\#, 0}( \Gamma _{0}( N) ) \times S_{k} ( \Gamma _{0}( N) ) \rightarrow \mathbb{C} \quad \text{by}
\end{align*}
\begin{align*}
\langle \sum_{n\gg -\infty} a_{n}q^{n}, \sum_{n>0} b_{n}q^{n}\rangle:=\displaystyle\sum_{n < 0} \dfrac{a_{n}b_{-n}}{n^{k-1}}.
\end{align*}
Then this pairing is well-defined.
\end{lem}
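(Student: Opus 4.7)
The plan is to verify that the value of $\sum_{n<0} a_n b_{-n}/n^{k-1}$ depends only on the class of $\sum a_n q^n$ in the quotient $\widehat{S_k}^{\#,0}(\Gamma_0(N))$. Note first that the sum is finite, since any element of $S_k^{\#,0}(\Gamma_0(N))$ is weakly holomorphic at $\infty$ and hence has only finitely many negative-index terms in its Fourier expansion there. By the very definition of $\widehat{S_k}^{\#,0}(\Gamma_0(N))$, it suffices to check that, for any fixed $g = \sum b_n q^n \in S_k(\Gamma_0(N))$, the pairing vanishes whenever $\sum a_n q^n$ lies in one of the two summands being quotiented out, namely $S_k(\Gamma_0(N))$ or $D^{k-1}(M_{2-k}^\#(\Gamma_0(N)))$.

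The first of these two cases is immediate: a cusp form has $a_n=0$ for all $n\le 0$, so every term of the sum vanishes. For the substantive case, suppose $\sum a_n q^n = D^{k-1}(h)$ with $h = \sum C_h(n) q^n \in M_{2-k}^\#(\Gamma_0(N))$. Then $a_n = n^{k-1} C_h(n)$, and the pairing reduces to
\[
\sum_{n<0} \frac{a_n\, b_{-n}}{n^{k-1}} = \sum_{n<0} C_h(n)\, b_{-n}.
\]
To show this vanishes, I would regard $h$ as a harmonic Maass form in $H_{2-k}(\Gamma_0(N))$ (which is legitimate, since weakly holomorphic modular forms satisfy all the defining conditions and have trivial non-holomorphic part), so that $\xi_{2-k}(h)=0$, and then invoke Lemma \ref{Brunier} applied to the pair $(h,g)$. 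The left-hand side $(\xi_{2-k}(h),g)$ is zero, while on the right-hand side the condition $h\in M_{2-k}^\#(\Gamma_0(N))$ forces $C_{h,s}^+(n)=0$ for $n<0$ at every cusp $s\ne\infty$, so only the $\infty$-cusp contributes; since its width is $1$, the remaining sum is exactly $\sum_{n<0} C_h(n)\, b_{-n}$, proving the vanishing.

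The main care needed is conventional bookkeeping: one must match the sign and width factors in Lemma \ref{Brunier} so that the principal part coefficients of $h$ at $\infty$ pair against the coefficients $b_{-n}$ of $g$ rather than $b_n$, and one must deal with the complex conjugation implicit in the Petersson inner product (which is harmless here because $g$ is a normalized newform on $\Gamma_0(N)$ with trivial character and hence has totally real Fourier coefficients). Beyond this routine verification no further ideas are required, and the pairing is well-defined.
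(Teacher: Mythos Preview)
Your proof is correct and follows essentially the same route as the paper: both reduce well-definedness to showing $\langle D^{k-1}(h),g\rangle=0$ by viewing $h\in M_{2-k}^{\#}(\Gamma_0(N))$ as a harmonic Maass form with $\xi_{2-k}(h)=0$ and invoking Lemma~\ref{Brunier}. Your write-up is more explicit about why only the cusp $\infty$ contributes and about the trivial $S_k(\Gamma_0(N))$ summand, but the argument is the same.
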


\begin{proof}
It is sufficient to show that $\langle D^{k-1}(h), g\rangle=0$ for all $h \in M_{2-k}^{\#}( \Gamma _{0}( N) )$ and $g \in S_{k} ( \Gamma _{0}( N) )$. Since $M_{2-k}^{\#}( \Gamma _{0}( N) ) \subset H_{2-k}( \Gamma _{0}( N) )$ and Lemma \ref{Brunier}, we have
\begin{align*}
\langle D^{k-1}(h), g\rangle=\sum_{n<0}C_{h}(n)C_{g}(n)= (\xi_{2-k}(h), g)=(0, g)=0.
\end{align*}
\end{proof}

\begin{lem}
Let $g$ be a normalized newform. If $f$ is good for $g$ then we have
\begin{align*}
\langle D^{k-1}(f), g\rangle=1.
\end{align*}
\end{lem}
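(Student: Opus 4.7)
The plan is to identify the pairing $\langle D^{k-1}(f), g\rangle$ with the Petersson inner product $(\xi_{2-k}(f), g)$, and then use the defining property of a good lift to evaluate the latter. Since $\xi_{2-k}(f) = g/\|g\|^2$, the right-hand side is simply $\|g\|^2 / \|g\|^2 = 1$, so it suffices to prove the identification.

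First I would unfold the definition of the pairing. Since $D^{k-1}$ multiplies the $n$-th Fourier coefficient by $n^{k-1}$, the principal part of $D^{k-1}(f)$ at $\infty$ is $\sum_{n<0} n^{k-1} C_f^+(n)\, q^n$. Plugging into the formula from Lemma \ref{pairing},
\begin{align*}
\langle D^{k-1}(f), g \rangle = \sum_{n < 0} \frac{n^{k-1} C_f^+(n)\, C_g(-n)}{n^{k-1}} = \sum_{n < 0} C_f^+(n)\, C_g(-n).
\end{align*}

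Next I would invoke Lemma \ref{Brunier} applied to $f$ and $g$. This expresses $(\xi_{2-k}(f), g)$ as a sum over all cusps of $\Gamma_0(N)$ of pairings between the principal part coefficients of $f$ and the Fourier coefficients of $g$ at that cusp. For the cusp $\infty$ (width $1$), this contribution is precisely $\sum_{n<0} C_f^+(n)\, C_g(-n)$, matching the computation above. For any other cusp $s \in \mathbb{Q}$, the definition of a good lift (Definition \ref{good}(2)) forces the principal part of $f$ at $s$ to be constant, so $C_{f,s}^+(n) = 0$ for every $n < 0$; hence these cusps contribute nothing to the sum in Lemma \ref{Brunier}.

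Combining these two steps gives $\langle D^{k-1}(f), g \rangle = (\xi_{2-k}(f), g)$, and Definition \ref{good}(3) yields $(\xi_{2-k}(f), g) = (g/\|g\|^2, g) = 1$, completing the proof. The only subtlety I foresee is bookkeeping: making sure that the indexing convention in Lemma \ref{Brunier} is compatible with the pairing of Lemma \ref{pairing} (i.e.\ pairing the $n$-th principal part coefficient of $f$ with the $(-n)$-th coefficient of $g$), and verifying that the ``constant principal part at other cusps'' condition really does annihilate the off-$\infty$ contributions. No deeper obstacle is expected, since this lemma is essentially a restatement of Bruinier--Funke type pairings in the language adapted to $\widehat{S_k}^{\#,0}(\Gamma_0(N))$.
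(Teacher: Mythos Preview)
Your proposal is correct and follows exactly the route the paper intends: the paper's own proof is a one-line citation of Lemma \ref{Brunier} together with the definition of ``good for $g$'', and you have simply unpacked that citation in full. Your care about the indexing convention (pairing $C_f^+(n)$ with $C_g(-n)$) is well placed, since the paper's statement of Lemma \ref{Brunier} writes $C_{g,s}(n)$ for $n<0$, which is evidently a typographical slip for $C_{g,s}(-n)$.
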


\begin{proof}
This Lemma is followed by Lemma \ref{Brunier} and the definition of ``good for $g$''.
\end{proof}

\begin{thm}
Suppose that $k$ is an even integer and $N$ is a natural number. Let $g \in S_{k}(\Gamma_{0}(N))$ be a normalized newform with complex multiplication by an imaginary quadratic field $K$ and $p$ an odd prime number.
Assume that $p$ is inert in $\mathcal{O}_{K}$ and $p \nmid N$.
If $\dim S_{k}(\Gamma_{0}(N))=1$, then $v_{p}(\alpha_{g})=0$.
\end{thm}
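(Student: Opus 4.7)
The plan is to pick up from the formula $\alpha_g = c \lim_{m \to \infty} C_F(p^{2m+1})/\beta^{2m}$ already established in the proof of Theorem \ref{main not 0}, and to use the pairing of Lemma \ref{pairing} to control $v_p(c)$ and the $p$-adic size of the limit separately.

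The first step is to pin down $c$ exactly by applying $\langle \cdot, g \rangle$ to the identity \eqref{kihon eq}. The term $\langle D^{k-1}(h), g \rangle$ vanishes by Lemma \ref{pairing}, and $\langle g, g \rangle$ vanishes because the pairing sum ranges only over negative indices, where $g$ has no Fourier coefficients. Combined with $\langle D^{k-1}(f), g \rangle = 1$ (the lemma immediately preceding the main theorem statement), this reduces to the single scalar equation $c \langle F, g \rangle = 1$. Using the explicit shape $F = -q^{-1} + \sum_{n \geq 2} C_F(n) q^n$ from Lemma \ref{hanson} together with $g = q + \sum_{n \geq 2} C_g(n) q^n$, only the $n = -1$ summand in $\langle F, g \rangle$ survives, and
\[
\langle F, g \rangle = \frac{(-1) \cdot 1}{(-1)^{k-1}} = 1
\]
because $k$ is even. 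Hence $c = 1$, which is trivially a $p$-adic unit.

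The second step is to check that the limit itself has $v_p = 0$. Since $p$ is inert in $\mathcal{O}_K$ we have $C_g(p) = 0$, so $\beta$ is a root of $X^2 + p^{k-1}$ and $\beta^{2m} = (-p^{k-1})^m$, giving $v_p(\beta^{2m}) = m(k-1)$. Lemma \ref{hanson} supplies $v_p(C_F(p^{2m+1})) = m(k-1)$, so every term of the sequence $C_F(p^{2m+1})/\beta^{2m}$ lies in the closed set $\{x \in \mathbb{C}_p : v_p(x) = 0\}$, and therefore so does its limit. Multiplying by $c = 1$ gives $v_p(\alpha_g) = 0$, as required.

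The argument is essentially just a matter of lining up the machinery already prepared: the pairing of Lemma \ref{pairing} is tailored so that the error terms $D^{k-1}(h)$ and $dg$ in \eqref{kihon eq} both drop out, and Lemma \ref{hanson} is tailored so that $v_p(C_F(p^{2m+1}))$ cancels $v_p(\beta^{2m})$ exactly. I do not anticipate a substantive obstacle; the only moderately delicate point is the sign-sensitive evaluation $\langle F, g \rangle = 1$, which relies on $k$ being even so that $(-1)^{k-1} = -1$ cancels the sign in $C_F(-1) = -1$.
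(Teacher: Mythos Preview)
Your proof is correct and follows essentially the same approach as the paper: use the pairing of Lemma \ref{pairing} together with $\langle D^{k-1}(f),g\rangle=1$ and $\langle F,g\rangle=1$ to deduce $c=1$, and combine this with the valuation estimate of Lemma \ref{hanson} (and the closedness of the $p$-adic units) to conclude $v_p(\alpha_g)=0$. Your version simply unpacks a few steps more explicitly---arguing directly that the $D^{k-1}(h)$ and $dg$ terms pair to zero, and spelling out the sign computation $\langle F,g\rangle = (-1)/(-1)^{k-1}=1$---whereas the paper works in the quotient $\widehat{S_k}^{\#,0}(\Gamma_0(N))$ where those terms are already zero.
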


\begin{proof}
From the discussion in Theorem \ref{main not 0}, it is sufficient to show that $c=1$.
From the definition of $\langle \cdot , \cdot \rangle$, we have
\begin{align*}
\langle F , g\rangle=1.
\end{align*}
Since $D^{k-1}(f)=cF \text{ in }  \widehat{S_{k}}^{\#, 0}( \Gamma _{0}( N) )$,
\begin{align*}
1=\langle D^{k-1}(f), g\rangle=\langle cF, g \rangle=c\langle F, g\rangle=c.
\end{align*}
\end{proof}


\bibliographystyle{amsplain}
\bibliography{References}

\end{document}